\newenvironment{proof}[1][]{\noindent {\bf Proof #1:\;}}{\hfill $\Box$}
\newtheorem{theorem}{Theorem}
\newtheorem{lemma}{Lemma}
\newtheorem{proposition}{Proposition}
\newtheorem{corollary}{Corollary}
\newtheorem{definition}{Definition}
\newtheorem{assumption}{Assumption}
\newtheorem{remark}{Remark}
\author{Mathieu Claeys$^1$, Didier Henrion$^{2,3,4}$, Martin Kru{\v{z}}{\'\i}k$^{5}$ }
\title{Semi-definite relaxations for optimal control problems with oscillation and concentration effects}
\newcommand{\R}{\mathbb{R}}
\newcommand{\N}{\mathbb{N}}
\newcommand{\PositiveMeasures}{\mathcal{M}^+}
\newcommand{\SignedMeasures}{\mathcal{M}}
\newcommand{\ContFunc}{\mathcal{C}}
\newcommand{\ubar}{u}
\newcommand{\ybar}{y}
\newcommand{\wbar}{w}
\newcommand{\diff}{d}
\newcommand{\bt}{\begin{theorem}}
\newcommand{\et}{\end{theorem}}
\newcommand{\g}{\gamma}
\renewcommand{\d}{d}
\renewcommand{\a}{\alpha}
\newcommand{\e}{{\rm e}}
\newcommand{\be}{\begin{eqnarray}}
\newcommand{\ee}{\end{eqnarray}}
\newcommand{\s}{\sigma}
\renewcommand{\e}{\varepsilon}
\renewcommand{\b}{\beta}
\newcommand{\Rn}{\R^{m}}
\newcommand{\LB}{\left\{^{^{^{}}}\right.\!\!}
\newcommand{\RB}{\!\!\left.^{^{^{}}}\right\}}
\newcommand{\rca}{\mathcal{M}}
\newcommand{\cDM}{{DM}^p(T;\R^m)}
\begin{document}

%\date{Draft version of \today}
\maketitle

\footnotetext[1]{SABCA Brussels, Belgium.}
\footnotetext[2]{CNRS-LAAS, 7 avenue du colonel Roche, F-31400 Toulouse, France.}
\footnotetext[3]{Universit\'e de Toulouse, LAAS,F-31400 Toulouse; France.}
\footnotetext[4]{Faculty of Electrical Engineering, Czech Technical University in Prague,
Technick\'a 2, CZ-166 26 Prague, Czech Republic.}
\footnotetext[5]{Institute of Information Theory and Automation of the ASCR, Pod vod\'arenskou ve{\v{z}}{\'\i} 4, CZ-182 08, Prague, Czech Republic.}
\footnotetext[6]{Faculty of Civil Engineering, Czech Technical University in Prague, Th\'akurova 7, CZ-166 29 Prague,
Czech Republic.} 

\begin{abstract}
Converging hierarchies of finite-dimensional semi-definite relaxations have
been proposed for state-constrained optimal control
problems featuring oscillation phenomena, by relaxing controls as Young measures. These
semi-definite relaxations were later on extended to
optimal control problems depending linearly on the
control input and typically featuring concentration
phenomena, interpreting the control as a measure of
time with a discrete singular component modeling
discontinuities or jumps of the state trajectories. In this
contribution, we use measures introduced originally by
DiPerna and Majda in the partial differential equations
literature to model simultaneously, and in a unified
framework, possible oscillation and concentration
effects of the optimal control policy. We show that
hierarchies of semi-definite relaxations can also be
constructed to deal numerically with nonconvex optimal control
problems with polynomial vector field and semialgebraic
state constraints.
\end{abstract}

\section{Introduction}
This paper is devoted to a numerical method for solving optimal control problems which may exhibit concentrations and/or oscillations.  
Such problems naturally appear in many
applications, for instance in controlling space shuttles, 
in impulsive control theory, see e.g. \cite{blaquire},
\cite{bryson-ho}, or  \cite{getz-martin}.

 A typical example  how concentration effects enter a minimization problem is the following: 
\begin{equation}\label{preface1}
        \begin{array}{ll}
\mbox{inf} & \displaystyle{\int_0 ^1 (t-1/2)^2|u(t)|\,\d t } \\
\mbox{s.t.} & \dot y = u,
                      \ \ y(0) = 0,\  y(1)=1, \\
        & y\in W^{1,1}(0,1),\ \ u\in L^1(0,1),
        \end{array}
\end{equation}
where $u$ is a control living in the Lebesgue space of square integrable
functions $L^2(0,1)$, $y$ is the corresponding state living in the
Sobolev space $W^{1,1}(0,1)$ of functions whose (distributional)
derivative belongs to the Lebesgue space of integrable functions $L^1(0,1)$,
and the dot denotes time derivative.

Roughly speaking, an optimal control must here balance between transfering the state from $0$ to $1$ and remaining constant ``as much as possible''.   Therefore, the optimal state should switch 
between two constants. As the function  $t\mapsto (t-1/2)^2$ attains its minimum   on $[0,1]$ at $t=1/2$, it is desirable to switch from zero to one in a vicinity of $t=1/2$. Having this in mind, it is easy to check that in problem (\ref{preface1}) the infimum is zero but it cannot be attained. Changing suitably the weight function $t\mapsto(t-1/2)^2$, the jump point can appear anywhere in $[0,1]$, and in particular, also on the boundary.

 Concentrations do not have to occur only at isolated
points,  but they can be smeared out  along the whole interval.
This can be demonstrated on the following  problem:
\begin{equation}\label{zadani3}
        \begin{array}{ll}
\mbox{inf} & \displaystyle{\int_0 ^1 \left(\frac{u(t)^2}{1+u(t)^4}+(y(t)-t)^2\right)\,\d t } \\[3mm]
\mbox{s.t.} & \displaystyle{\dot y = u},\ \ y(0) = 0, \\
        & y\in W^{1,1}(0,1),\ \ u\in L^1(0,1), \ \ u \geq 0.
        \end{array}
\end{equation}
Here, the cost favors instantaneous controls of null or very large magnitude, while averaging out to $1$. The infimum of (\ref{zadani3}) is
$0$. Indeed, consider the control 
$$
 u^k(t)=\left\{\begin{array}{ll}
                         k & \mbox{ if $t\in [\frac{l}{k}-\frac{1}{2k^2},
                             \frac{l}{k}+\frac{1}{2k^2}]$}\\ 
                          0 & \mbox{ otherwise}
                          \end{array}
                   \right. $$
where $k\in\N$ is large enough and $1\le l\le k-1$. The corresponding state then reads  $y^k(t)=\int_0^t u^k(s)\,\d s$. 
An easy calculation shows that with this sequence $(y^k,u^k)_{k\in\N}$ the cost in problem (\ref{zadani3})
can approach zero as closely as desired,
so that the infimum is zero, while there is obviously no minimizer.      

In this paper, we propose a numerical method for solving those types of problems in a unified framework.
The high-level methodology we follow is now familiar \cite{sicon,impulse}:
\begin{itemize}
\item First, a suitable relaxed concept of control is assumed, so as to guarantee the existence
of minimizers under sufficiently broad assumptions.
\item Second, as the problem still depends non-convexly on trajectories, whether via dynamics,
constraints or the cost, suitable variants of occupation measures are introduced,
so as to lift the optimization problem as a linear program over a measure
space.
\item Finally, if problem data are polynomial, directly or via algebraic lifts, this linear
program can be solved via the paraphernalia of moment-sum-of-squares semidefinite programming
relaxations (moment-SOS relaxations for short).
\end{itemize}

For problems with bounded controls, which may give rise to fast oscillations, appropriate
relaxed concepts of controls were developed decades ago, by the likes of Young \cite{Young},
Fillipov, Warga, Gamkrelidze \cite{Gamkrelidze} and many others, see e.g. \cite[Part III]{Fattorini}.
The equivalence
between the relaxed control problem and the measure LP was then proven in
\cite{Vinter1978Equivalence,Vinter1993ConvexDuality}, see also
\cite{GaitsgoryQuincampoix2009}. More recently, \cite{sicon}
proposed to solve this linear program via moment-SOS relaxations.
For unbounded controls entering affinely in the problem, which may give rise to concentration
effects, various weak concepts of solution have been proposed, where the control
is now a measure of time \cite{Neustadt,Gamkrelidze} in the simplest case, with more intricate cases requiring
appropriate graph completions \cite{Bressan1988graphcompl} to ensure well-posedness of the relaxed program.
Variants of occupation measures were then introduced \cite{impulse,claeys2014thesis} so as to solve the problem via
moment-SOS relaxations, optimizing also w.r.t. all possible graph completions.

For unbounded controls entering non-linearly in the control problem, minimizing sequences
may give rise to both oscillation and concentration phenomena. DiPerna and
Majda \cite{diperna1987oscillations} have introduced measures allowing to capture the limit of admissible control
sequences. Those results were then used in \cite{Kruzik1998optimization} as the basis of a numerical method to
solve unconstrained, convex (in the trajectories) problems. In this paper, we introduce
appropriate occupation measures to relax this class of problem as a linear program over
measures, and develop the subsequent moment-SOS relaxations resulting from this formulation.
This allows to attack non-convex problems, and as importantly for practical applications,
handle state constraints seamlessly.

\subsection*{Contribution}
The method of moments has already been used in relaxations of optimal control problems
with oscillations in \cite{cdc05} and then \cite{sicon} using occupation measures and moment-SOS techniques.
See also \cite{Meziat-etal} and its calculus of variations counterpart \cite{Meziat-etal-2}
which do not use occupation measures but time-dependent moments and hence time-discretization,
and which do not account for state constraints.
Occupation measures and moment-SOS techniques were then used in \cite{impulse,claeys2014thesis}
to cope with concentrations. In this paper, these techniques are extended 
(to our best knowledge for the first time) for optimal control problems with oscillations and concentrations.  
As a by-product, we describe explicitly how compactification techniques can be used
to deal numerically with non-compact sets in the moment-SOS approach to optimal control.

\subsection*{Organization of the paper}

The paper is organized as follows. Section \ref{sec:prelim} introduces the problem and recalls key results about DiPerna-Majda
measures. Section \ref{sec:linearPrograms} develops the primal linear program on measures, and also present its
Hamilton-Jacobi Bellman dual. Section \ref{sec:momentRelax} details how this primal-dual pair can be solved via moment-SOS
relaxations and semidefinite programming. Section \ref{sec:extensions} presents several extensions to the approach, relegated to this section
for clearer exposition. Finally, \ref{sec:examples} presents several relevant examples.

\subsection*{Notations}

Let $\SignedMeasures(X)$ denote the vector space of finite, signed, Radon measures supported on  an Euclidean subset
$X \subset \R^n$,  equipped with the weak-star topology,  see e.g.  \cite{Royden} for background material.
Let $\PositiveMeasures(X)$ denote the cone of non-negative measures in $\SignedMeasures(X)$.
For a continuous function $f \in  \ContFunc(X)$, we denote by $\int_{X} \! f(x) \, \mu( \d x)$ the integral of $f$ w.r.t. the measure $\mu \in \SignedMeasures(X)$.
When no confusion may arise, we use the duality bracket notation $\langle f, \mu \rangle = \int f \mu$ for the integral to simplify exposition and to insist on the duality relationship between $ \ContFunc(X)$ and $\SignedMeasures(X)$ on compact $X$.
 The Dirac measure supported at $x^*$, denoted by $\delta_{x^*}$, is the measure for which $\langle f, \delta_{x^*} \rangle = f(x^*)$ for all $f \in \ContFunc(X)$.  The indicator function of set $A$, denoted by $I_A(x)$,  is equal to one if $x \in A$, and zero otherwise. The space of probability measures on $X$,  the subset of $\PositiveMeasures(X)$ with mass $\langle 1,\mu \rangle = 1$, is denoted by $\mathcal{P}(X)$. 

Let $T:=[t_0,\:t_f]$ denote a time interval, with $t_f > t_0 \geq 0$.
We use standard notation for Lebesgue and Sobolev spaces, i.e. $L^p(T;\R^m)$ and $W^{1,1}(T;\R^m)$. Due to nonreflexivity of $W^{1,1}(T;\R^m)$ we are forced to enlarge 
the space of states (trajectories) to $BV(T;\R^m)$, the space  of functions of bounded variations on $T$. However, the standard definitions of the BV space does not  take into account of jumps of function values at the boundary of the interval \cite{Ambrosio2000BV} as it is typically defined on an open set.  Nevertheless, we can choose a small $\e>0$ and extend any absolutely continuous function $y\in W^{1,1}(T;\R^m)$ to $(t_0-\e;t_f+\e)$ such that this extension $\tilde y$ satisfies  $\tilde y(t)=y(0)$ for $t\in [t_0-\e;t_0)$ and $\tilde y(t)=y(t_f)$ for $t>t_f$.  Then we define 
$BV(T;\R^m)$ as the set of restrictions to $T$ of the weak-star (in the sense of BV) closure of the set $\{\tilde y:\, y\in W^{1,1}(T;\R^m)\}$. In fact, this definition coincides with the so-called Sou\v{c}ek space $W^{1,\mu}(T;\R^m)$ introduced in \cite{soucek} and used e.g. in \cite{Roubicek}. 

For multi-index $\alpha \in \N^n$ and vector $x\in \R^n$, we use the notation $x^\alpha := \prod_{i=1}^n x_i^{\alpha_i}$ for a monomial.
We denote by $\N^n_m$ the set of vectors $\alpha \in \N^n$ such that $\sum_{i=1}^n \alpha_i \leq m $. 
The moment of multi-index $\alpha \in \N^n$ of measure $\mu \in \PositiveMeasures(X)$ is then defined as the real $y_\alpha = \langle x^\alpha, \mu \rangle$.  
A multi-indexed sequence of reals $(z_\alpha)_{\alpha \in \N^n}$ is said to have a representing measure on $X$ if there exists $\mu \in \PositiveMeasures(X)$ such that $z_\alpha = \langle x^\alpha , \mu \rangle$ for all $\alpha \in \N^n$.
Let $\R[x]$  denote the ring of polynomials in the variables $x \in \R^n$, and let $\deg p$ denote the (total) degree of polynomial $p$.
A subset of $\R^n$ is basic semi-algebraic if it is defined as the intersection of finitely many polynomial inequalities, 
namely $\lbrace x \in \R^n : \; g_i(x) \geq 0, \, i = 1 \ldots n_{X}\rbrace$ with $g_i(x) \in \R[x], \, i = 1 \ldots n_{X}
$.  

\section{Preliminaries}
\label{sec:prelim}

\subsection{DiPerna-Majda measures}

As the motivating examples reveal, the infimum of the optimal control problems might not be attained.
For that reason, we have to construct a suitable locally compact convex hull
of the Lebesgue spaces \cite{Roubicek}. As already  mentioned, we will use a special
extension proposed by DiPerna and Majda \cite{diperna1987oscillations,Kruzik-Roubicek,Roubicek-Kruzik-2000}.

Let $S^{m-1}$ denote the unit sphere in $\R^m$ and let
\begin{eqnarray}\label{compsferou}
{\cal R}&=& \LB v\!\in\!\ContFunc_b(\R^m) \: :\: \exists \: v_0\!\in\!\ContFunc_b(\R^m),\ v_1\!\in\!
\ContFunc(S^{m-1}),\ c\in\R:\nonumber\\ 
& & \ \ \ \lim_{|u|\to\infty} v_0(u)=0, \:\: v(u) = c+
v_0(u)+v_1\left(\frac{u}{|u|}\right) 
\frac{|u|^p}{1+|u|^p}\RB
\end{eqnarray}
denote the complete separable  subring of the ring $\ContFunc_b(\R^m)$ of bounded continuous functions on $\R^m$.
The corresponding compactification of $\R^m$, denoted $\g\R^m$, is then homeomorphic
with the unit ball or equivalently the simplex in $\R^m$. This means that every $v\in{\cal R}$ admits a uniquely
defined continuous extension on $\g\R^m$ (denoted then again by
$v$ without any misunderstanding) and conversely, for every $v\in
\ContFunc(\g\R^m)$, the restriction on $\R^m$ lives in ${\cal R}$.
 
Let $\s \in \PositiveMeasures(T)$, and let $L^{\infty}_{\rm w}(T,\s;\rca(\g \R^m))$ denote the Banach space of all weakly $\s$-measurable\footnote{This means
that for any $w\in {\cal R}$, the mapping $t \in T\mapsto\int_{\g\R^m}
w(\ubar) \nu(\diff \ubar | t) \in \R$ is $\s$-measurable in the usual sense.}
$\s$-essentially bounded mappings from $T$ to the set of Radon
measures $\rca(\g\R^m)$ on $\g\R^m$. 
Let ${\cal U}(T,\sigma;\gamma\R^m)$ denote
the subset of $L^{\infty}_{\rm w}(T,\s;\rca(\g \R^m))$ consisting
of the mappings $\nu:t\mapsto\nu(\diff u | t)$ such that
$\nu(\diff u| t)$ is a probability measure on $\g\R^m$ for $\s$-almost all
$t\in T$.

DiPerna and Majda \cite{diperna1987oscillations}  showed that, given a bounded
sequence $(u_k)_{k\in\N}$ in $L^p(T;\R^m)$, $1\le p<+\infty$, there exists a
subsequence (denoted by the same indices) and a measure  $\eta\in\rca^+(T\times\gamma\R^m)$ such that  for any 
$h_0\!\in\! \ContFunc(T\times\gamma\R^m)$,
\be \label{defDM10}
\lim_{k\to\infty}\int_T h(t,u_k(t))\d t\ =
\int_T\!\int_{\g\R^m}h_0(t,u) \d\eta(t,u)\ ,
\ee 
where $h(t,u)=h_0(t,u)(1+|u|^p)$.  DiPerna-Majda measures are precisely all the  measures $\rca^+(T\times\gamma\R^m)$
that are attainable in the sense of \eqref{defDM10} by some sequence in $L^p(T;\R^m)$.

For our purposes, it will be convenient to disintegrate a DiPerna-Majda measure $\eta\in\rca^+(T\times\gamma\R^m)$
as a product of a time marginal $\s\in\rca^+(T)$ and a state conditional 
$\nu\in{\cal U}(T,\sigma;\gamma\R^m)$, i.e.
\[
\eta(dt,du) =\nu(du|t)\sigma(dt).
\]
By this, we mean that for any $h_0\!\in\! \ContFunc(T\times\gamma\R^m)$,
\be \label{defDM2}
\lim_{k\to\infty}\int_T h(t,u_k(t))\d t\ =
\int_T\!\int_{\g\R^m}h_0(t,u) \nu(\d u |t) \s(\d t),
\ee 
where $h$ is related to $h_0$ as before.  
We say that such a pair
$(\sigma, \nu)\in \rca(T)\times {\cal Y}(T,\s;\g\R^m)$ 
is attainable by a sequence 
$(u_k)_{k\in\N}\subset L^p(T;\R^m)$. The set of all attainable
pairs $(\sigma,\nu)$ is denoted by $\cDM$.

Given $(\sigma,\nu) \in \cDM$ we denote by $\|(\sigma,\nu)\|$ the norm
of total variation, i.e. the mass of the measure $\eta(dt,du):=\nu(du|t)\sigma(dt) \in 
\rca^+(T\times\gamma\R^m)$.

\subsection{Control relaxations}

We consider the following class of optimal control problems:
\begin{eqnarray}\label{eq:OCP}
        \begin{array}{rll}
     &\mbox{minimize} & \displaystyle{\int_T l(t,y,u) \d t
     } \\ 
&\mbox{s.t.} 
        & \displaystyle{\dot y = f(t,y,u),} \qquad  y(0)=y_0, \\ 
        & & y\!\in\!W^{1,1}(T;Y),\ \ \ \ u\!\in\!L^p(T;\R^m),
        \end{array}
\end{eqnarray}
where $Y$ is a given subset of $\Rn$.
For this section and the next, we make the following integrability and growth conditions on the Lagrangian and dynamics:

\begin{assumption}
\label{th:assGrowth}
Lagrangian $l(t,y,u):= a(t,y,u)+b(t,u)$ and dynamics $f(t,y,u):= c(t,y,u)+d(t,u)$ are such that
 $a:T\times Y \times\R^m\to\R$ is a Carath\'eodory
function\footnote{This means that $a(t,\cdot,\cdot):Y\times\R^m\to\R$ is
continuous for almost every $t\in T$ and $a(\cdot,\ybar,\ubar):T\to\R$ is
measurable for all $\ybar$ and $\ubar$.},
$c:T\times Y \times\R^m\to\R^n$ is a Carath\'eodory
function,  and $b:T\times\R^m\to\R$ is continuous, while  satisfying 
\be\label{ass1}
& & \max(|a(t,\ybar,\ubar)|,|c(t,\ybar,\ubar)|)\le
\a_{1+\e}(t)+\b(|\ybar|^{1/\e}+|\ubar|^{p/(1+\e)}), \\
\label{ass2} 
& & b_0\in \ContFunc(T;{\cal R}),\ \ \ \ \ \mbox{ where }\ 
b_0(t,\ubar):=b(t,\ubar)/(1+|\ubar|^p),\\
\label{ass3}
& & |c(t,\ybar,\ubar)|\le (\a_1(t)+\b| \ubar |^{p})(1+| \ybar |), \\
\label{ass4}
& & |a(t,\ybar_1,\ubar)-a(t,\ybar_2,\ubar)|\le
(\a_1(t)+\b|\ybar_1|^{1/\e}+\b|\ybar_2|^{1/\e}
+\b|\ubar|^{p/(1+\e)})|\ybar_1-\ybar_2|,\\
\label{ass5} 
& & |c(t,\ybar_1,\ubar)-c(t,\ybar_2,\ubar)|\le
(\a_1(t)+\b|\ybar_1|^{1/\e}+\b|\ybar_2|^{1/\e}
+\b|\ubar|^{p/(1+\e)})|\ybar_1-\ybar_2|,\\
\label{ass6} 
& & d_0\in \ContFunc(T;{\cal R}^n),\ \ \ \ \ \mbox{ where }\ 
d_0(t,\ubar):=d(t,\ubar)/(1+|\ubar|^p),\\
\label{ass7}
& & a(t,\ybar,\ubar)+b(t,\ubar)\ge\delta |\ubar|^p\ ,
\ee
with some $\e>0$, $\delta\ge 0$, $\b\in\R$, $\a_q\in L^q(T)$. 
\end{assumption}

Note that if $\delta=0$ in \eqref{ass7}, we must slightly adapt the discussion in the paper by bounding the set of admissible controls in $L^p(T;\R^m)$. This will result in an additional linear constraint in the measure LP and the moment relaxations, as explained in Section \ref{sec:extensions}. Moreover, we really can admit only measurability in the first variables of $a$ and $c$. The reason is their growth in the control variable which is smaller than $p$, namely at most $p/(1+\e)$.

As shown  in \cite{Kruzik1998optimization}, DiPerna-Majda measures allow for the
following relaxation of \eqref{eq:OCP}, which we refer to the ``strong'' problem (by opposition to a
weak problem to defined later on):
\begin{equation}
\label{eq:strongProblem}
        \begin{array}{rll}
&\mbox{minimize} &\displaystyle{\int_T \int_{\g\R^m} \frac{l(t,y(t),\ubar)}{1+|\ubar|^p} \:\nu(\d \ubar |t)\s(\d t)} \\[3mm]
&\mbox{s.t.} &
y(\tau) = y(t_0) + \displaystyle \int_{t_0}^\tau \int_{\g\R^m}\frac{f(t,y(t),\ubar)}{1+|\ubar|^p}\nu(\d \ubar |t)\s(\d t), \:\: \tau \in T,\\[2mm] 
     & & y\in BV(T;\R^m),\ \ \ \ \
         (\s, \nu)\in\cDM,\ \\
        \end{array}
\end{equation}
where the differential equation is now expressed in integral form.

The  natural embedding
\begin{equation}
\label{eq:naturalImbedding}
\begin{array}{rclcl}
i & : & L^p(T;\R^m) & \to & \cDM \\
&& u & \mapsto & (\sigma,\:\nu) :=((1 + \lvert u(t) \rvert^p)\d t,\:\delta_{u(t)}(\diff \ubar | t))
\end{array}
\end{equation}
establishes readily  that \eqref{eq:strongProblem} is really an extension of \eqref{eq:OCP}. In fact, the following two assertions justify that \eqref{eq:strongProblem} is
actually a legitimate relaxation of \eqref{eq:OCP}, see \cite{Kruzik1998optimization} for proofs.

\begin{proposition}\label{proposition31} 
Let \eqref{ass1}--\eqref{ass3}, \eqref{ass5}, and \eqref{ass6} be
valid. Assume $Y = \Rn$ and a fixed initial condition $y_0 \in \Rn$.
For $k\in\N$, let $y^k$ solve
\be\label{approxstate}
\dot y^k=f(t,y^k,u^k),\  \ y^k(t_0)=y_0,
\ee
and let the sequence $(u^k)_{k\in\N}$ attain $(\s,\nu)$ in the sense of \eqref{defDM2}.
Then $y^k \to y$ weakly star in ${\rm BV}(T;\Rn)$ and
$y^k(t_f)\to y(t_f)$, with $y$ uniquely defined by
\be\label{relaxstate}
y(\tau) = y_0 + \int_{t_0}^{\tau} \int_{\g\R^m}\!\frac{f(t,y(t),\ubar)}{1+|\ubar|^p}\nu(\d \ubar | t)\s(\d t),\:\: \tau \in T.
\ee
\end{proposition}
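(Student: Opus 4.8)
The plan is to combine a compactness argument for the states $y^k$ with a limit passage in the integral form of the dynamics, and then to close the argument with a uniqueness result for the limiting Volterra equation \eqref{relaxstate}. The structural feature that makes this possible is the splitting $f=c+d$: the part $c$ carries only subcritical growth in the control (exponent $p/(1+\e)$ in \eqref{ass1}), so it feels only the oscillations of $(u^k)$ and no concentration, whereas the critically growing part $d$ does not depend on the state, so its limit is governed directly by its continuous profile $d_0$ through \eqref{defDM2}.

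First I would establish compactness. Taking $h_0\equiv 1$ in \eqref{defDM2} gives $\int_T(1+|u^k|^p)\,\d t\to\V{(\s,\nu)}$, so $(u^k)$ is bounded in $L^p(T;\R^m)$. Feeding this into the growth bound \eqref{ass3} for $c$ and into the bound $|d(t,u^k)|\le(\mathrm{const})(1+|u^k|^p)$ coming from the boundedness of $d_0$ in \eqref{ass6}, I would apply Gronwall's inequality to \eqref{approxstate} to obtain a uniform bound $\V{y^k}_\infty\le C$. The same estimates then control $\int_T|\dot y^k|\,\d t=\int_T|f(t,y^k,u^k)|\,\d t$ uniformly, so $(y^k)$ is bounded in $BV(T;\R^m)$. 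Extracting a subsequence, $y^k\tow\tilde y$ weakly star in $BV$, and by the compact embedding into $L^1$ also strongly in $L^1$ and pointwise almost everywhere.

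Next I would pass to the limit in $y^k(\tau)=y_0+\int_{t_0}^\tau f(t,y^k,u^k)\,\d t$, treating the two parts of $f$ separately. For $d$, the rescaled profile $d_0$ is continuous on $T\times\g\R^m$ by \eqref{ass6}, so \eqref{defDM2} applies directly and yields the $d$-contribution of \eqref{relaxstate}. For $c$, I would first freeze the state: using the Lipschitz estimate \eqref{ass5}, the uniform bound $\V{y^k}_\infty\le C$, and the subcritical exponent in \eqref{ass1}, a H\"older estimate (pairing $|u^k|^{p/(1+\e)}\in L^{1+\e}$ against $|y^k-\tilde y|\to 0$ in every $L^q$) shows $\int_{t_0}^\tau|c(t,y^k,u^k)-c(t,\tilde y,u^k)|\,\d t\to 0$. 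It then remains to pass to the limit in $\int_{t_0}^\tau c(t,\tilde y(t),u^k)\,\d t$, where the integrand $c(t,\tilde y(t),\cdot)/(1+|\cdot|^p)$ vanishes at infinity in the control (again by the subcritical growth in \eqref{ass1}) and hence defines an admissible profile. Since $\tilde y$ is only $BV$, this integrand is merely Carath\'eodory in $t$, so the continuous-integrand statement \eqref{defDM2} must be upgraded to Carath\'eodory integrands; I would do this by a Scorza-Dragoni approximation, or invoke the corresponding result from \cite{Kruzik1998optimization,Kruzik-Roubicek}. The truncation to $[t_0,\tau]$ is harmless at every continuity point of $\tilde y$, equivalently at every non-atom of $\s$, of which there are all but countably many; the identity then extends to all $\tau$ by the one-sided continuity built into the $BV$ representative.

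This shows that $\tilde y$ satisfies \eqref{relaxstate}. Uniqueness of that equation follows from a final Gronwall argument: for two solutions the $d$-terms cancel and \eqref{ass5} bounds the $c$-difference by the distance of the states, forcing them to coincide. Hence the limit is independent of the extracted subsequence, so the whole sequence converges, and reading \eqref{relaxstate} at $\tau=t_f$ (a point we may take to be a non-atom of $\s$, or handle via the $BV$ trace) gives $y^k(t_f)\to y(t_f)$. The main obstacle is the $c$-limit: simultaneously decoupling the state dependence through the Lipschitz-plus-subcritical-growth estimate and justifying the DiPerna-Majda passage for a merely Carath\'eodory integrand is the technical heart of the argument, whereas the $d$-part and the Gronwall bounds are comparatively routine.
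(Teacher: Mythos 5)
The paper itself contains no proof of Proposition \ref{proposition31}: it defers to \cite{Kruzik1998optimization}, and your argument is essentially the proof given there --- the splitting $f=c+d$, the Gronwall and $BV$-compactness estimates, the direct DiPerna--Majda passage for $d$ via \eqref{ass6}, the freeze-the-state step for the subcritical part $c$ (Lipschitz bound \eqref{ass5} plus the $L^{1+\e}$ bound on $|u^k|^{p/(1+\e)}$, then a Young-measure/equi-integrability argument for the Carath\'eodory integrand, whose rescaled profile vanishes on the remainder $\g\R^m\setminus\R^m$ so that concentrations contribute nothing), and Gronwall uniqueness --- so your proposal is correct and takes the same route. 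One small repair: you cannot ``take $t_f$ to be a non-atom of $\sigma$'' (the paper explicitly allows concentrations at the boundary, cf.\ the discussion after \eqref{preface1}); instead observe that $y^k(t_f)=y_0+\int_T f(t,y^k(t),u^k(t))\,\d t$ is the \emph{untruncated} integral over $T$, so \eqref{defDM2} applies without any indicator function and yields $y^k(t_f)\to y(t_f)$ directly, an atom of $\sigma$ at $t_f$ included.
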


Hence, we can define a map $\pi:\cDM\to BV(T;\R^m)$ such that 
$\pi(\s,\nu)=y$ where $y$ is a solution to \eqref{relaxstate}.

\begin{proposition}\label{proposition}
Let \eqref{ass1}--\eqref{ass7} be valid. Assume $Y = \Rn$, a fixed initial condition $y_0$ and a free final state. Then
the infimum in the original problem \eqref{eq:OCP} is equal to the infimum in the
strong problem \eqref{eq:strongProblem}, which is attained. Moreover, 
every solution $(\sigma,\nu) \in\cDM$ to the strong problem \eqref{eq:strongProblem} is attainable in the sense of
\eqref{defDM2} by a minimizing sequence for the original problem \eqref{eq:OCP}, and,
conversely, every minimizing sequence for the original problem \eqref{eq:OCP} contains
a subsequence converging in the sense of \eqref{defDM2} to a solution of the
strong problem \eqref{eq:strongProblem}. 
\end{proposition}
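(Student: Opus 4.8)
The plan is to combine the direct method of the calculus of variations with the state--convergence result of Proposition~\ref{proposition31} and the attainability characterisation of DiPerna--Majda measures; throughout, write $J(\sigma,\nu)$ for the objective of \eqref{eq:strongProblem} and $\eta:=\nu\sigma\in\PositiveMeasures(T\times\g\R^m)$ for the associated measure. First I would dispose of the inequality $\inf\eqref{eq:strongProblem}\le\inf\eqref{eq:OCP}$: the embedding $i$ of \eqref{eq:naturalImbedding} sends any control $u\in L^p(T;\R^m)$ admissible for \eqref{eq:OCP} to a pair $(\sigma,\nu)\in\cDM$ admissible for \eqref{eq:strongProblem}, and substituting $\sigma=(1+|u|^p)\,\d t$, $\nu=\delta_{u(t)}$ collapses both the relaxed dynamics and the relaxed cost to the original ODE and the original integral; thus feasibility and objective value are preserved, and the relaxed infimum cannot exceed the original one.

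For the reverse inequality and for attainment I would run the direct method on \eqref{eq:strongProblem}. Let $(\sigma^k,\nu^k)$ be a minimising sequence and $\eta^k:=\nu^k\sigma^k$. Testing $\eta^k$ against $(t,u)\mapsto 1/(1+|u|^p)$, which belongs to $\mathcal R$ and hence extends continuously to $\g\R^m$, the defining relation \eqref{defDM2} forces the $T$--marginal of $(1+|u|^p)^{-1}\eta^k$ to equal Lebesgue measure, so $\int_T\!\int_{\g\R^m}(1+|u|^p)^{-1}\,\d\eta^k=|T|$. Coercivity \eqref{ass7} (with $\delta>0$; the case $\delta=0$ requires the control--bounding device of Section~\ref{sec:extensions}) bounds $\delta\int\!\int|u|^p(1+|u|^p)^{-1}\,\d\eta^k$ by the cost, and adding the two estimates gives a uniform bound on the total mass $\eta^k(T\times\g\R^m)=\sigma^k(T)$. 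Banach--Alaoglu then yields a weak--star limit $\eta^k\rightharpoonup^*\eta$, disintegrated as $(\sigma,\nu)$. Both the Lebesgue--marginal identity and the almost--everywhere probability normalisation of the conditionals survive the weak--star limit --- the former precisely because its test function extends continuously to the compactification --- which is the characterisation guaranteeing $(\sigma,\nu)\in\cDM$; this is where the compactness theorem of \cite{diperna1987oscillations,Kruzik-Roubicek} enters.

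It remains to pass to the limit in the cost. Splitting $l/(1+|u|^p)=a/(1+|u|^p)+b_0$, the term $b_0\in\ContFunc(T;\mathcal R)$ extends continuously to $T\times\g\R^m$ by \eqref{ass2}, so $\langle b_0,\eta^k\rangle\to\langle b_0,\eta\rangle$ directly. For the $a$--term the sub--$p$ growth \eqref{ass1} makes $a(t,y,u)/(1+|u|^p)$ vanish as $|u|\to\infty$, so it too extends continuously in $u$ to $\g\R^m$; combining this with the state convergence $y^k\to y$ (Proposition~\ref{proposition31}, upgraded from attaining $L^p$ sequences to weak--star convergent elements of $\cDM$ by a diagonal argument) and the Lipschitz--in--$y$ estimate \eqref{ass4} gives $J(\sigma,\nu)\le\liminf_k J(\sigma^k,\nu^k)=\inf\eqref{eq:strongProblem}$, so $(\sigma,\nu)$ is a minimiser. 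Now any minimiser is by definition attained by some $(u^k)\subset L^p(T;\R^m)$ in the sense of \eqref{defDM2}; the same continuity argument, using \eqref{ass4}--\eqref{ass5} to absorb the varying state, shows the original costs $\int_T l(t,y^k,u^k)\,\d t$ converge to $J(\sigma,\nu)$, whence $\inf\eqref{eq:OCP}\le\inf\eqref{eq:strongProblem}$. The two infima therefore coincide, which simultaneously proves the equality and the assertion that every strong minimiser is attainable by a minimising sequence for \eqref{eq:OCP}. The converse assertion is the same argument run backwards: given a minimising sequence $(u^k)$ for \eqref{eq:OCP}, coercivity \eqref{ass7} bounds it in $L^p(T;\R^m)$, the DiPerna--Majda compactness theorem extracts a subsequence attaining some $(\sigma,\nu)\in\cDM$, and cost convergence together with the established equality of infima identifies $(\sigma,\nu)$ as a minimiser of \eqref{eq:strongProblem}.

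The main obstacle is the joint passage to the limit in the cost, where one must control two different convergences at once: the weak--star convergence of $\eta^k$ on the non--compact space $\R^m$, handled through the continuous extension to $\g\R^m$ (this is exactly why the growth of $a$ and $c$ in $u$ must stay strictly below $p$ and why $b_0,d_0$ must lie in $\ContFunc(T;\mathcal R)$), and the merely weak--star $BV$ convergence of the states $y^k$, handled through the Lipschitz estimates \eqref{ass4}--\eqref{ass5}. Within this, the delicate point is the upgrade of Proposition~\ref{proposition31} from attaining $L^p$ sequences to arbitrary weak--star convergent sequences in $\cDM$, which is the technical heart of the argument.
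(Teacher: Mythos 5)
The paper itself offers no proof of this proposition: it is stated as a known result, with the reader referred to \cite{Kruzik1998optimization} for the proofs. Your argument is, in essence, a faithful reconstruction of the proof given there: the embedding \eqref{eq:naturalImbedding} for the inequality $\inf\eqref{eq:strongProblem}\le\inf\eqref{eq:OCP}$, the direct method with the mass bound obtained by adding the coercivity estimate from \eqref{ass7} to the fixed-Lebesgue-marginal identity (your observation that $\tfrac{1}{1+|u|^p}+\tfrac{|u|^p}{1+|u|^p}=1$ bounds $\|(\sigma^k,\nu^k)\|$ is exactly the right mechanism), the characterization of $\cDM$ by the marginal condition from \cite{Kruzik-Roubicek} to show the weak-star limit is again attainable, and the splitting $l=a+b$ with \eqref{ass2} handling $b_0$ and the sub-$p$ growth \eqref{ass1}, \eqref{ass4} handling the state-dependent term. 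So the approach is correct and is the same as the paper's (delegated) proof.

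Two steps deserve more care than your sketch gives them, though neither invalidates the architecture. First, in the passage $\langle a(t,y(t),\ubar)/(1+|\ubar|^p),\eta^k\rangle\to\langle\cdot,\eta\rangle$, the integrand is only Carath\'eodory in $t$, while weak-star convergence in $\rca(T\times\g\R^m)$ tests only against jointly continuous functions; one needs a Scorza--Dragoni-type approximation of $a$ by continuous integrands, with the error controlled \emph{uniformly in $k$} precisely because all the measures $(1+|\ubar|^p)^{-1}\eta^k$ share the Lebesgue $T$-marginal you established, together with the sub-$p$ growth to kill the contribution of the remainder $\g\R^m\setminus\R^m$. This is exactly the point behind the paper's remark after Assumption \ref{th:assGrowth} that mere measurability of $a,c$ in $t$ can be admitted. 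Second, the ``diagonal upgrade'' of Proposition \ref{proposition31} to weak-star convergent sequences in $\cDM$, which you rightly flag as the technical heart, does go through as you suggest, but you should make explicit that it relies on metrizability of the weak-star topologies on bounded sets (separability of $\ContFunc(T\times\g\R^m)$ and of the predual of $BV$) to select the diagonal $u^{k,j(k)}$, and on uniform $BV$ bounds (via \eqref{ass3} and Gronwall, plus Helly) to convert weak-star $BV$ convergence of $y^k$ into the strong $L^q$ convergence that the Lipschitz estimates \eqref{ass4}--\eqref{ass5} and H\"older actually require.
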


\begin{remark}
If we consider an additional  constraint on state trajectories, i.e. $y(t)\in Y$ for some compact $Y\subset\R^n$,
Proposition~\ref{proposition} does not hold anymore in general. Indeed, 
consider \eqref{preface1}, our first example in the introduction, with $Y=\{0,1\}$. Then the infimum of  (\ref{eq:OCP})
is infinity because the admissible set of states is empty, while the infimum of (\ref{eq:strongProblem}) is zero.
Different boundary conditions and/or an explicit bound on the norm of the control might yield a similar relaxation gap.
On the other hand, we know that if $y\in BV(T;\R^m)$ is an optimal state (a solution) then $y\in L^\infty(T;\R^m)$. Therefore, there is a closed  ball  $B(0,R)$ in $\R^m$ centered at the origin and of radius $R$ which contains $y$.
Therefore, taking $Y:=B(0,R)$ for $R>0$ large enough, it is  always possible to  obtain  a solution on the whole $\R^m$.  
\end{remark}

Proposition \ref{proposition31} establishes uniqueness of the Cauchy problem for the relaxed differential equation. This motivates the following handy definition for the remainder of the paper.
\begin{definition}[Relaxed arc]
A triplet $(\sigma,\:\nu,\:y)$ is called a relaxed arc for strong problem \eqref{eq:strongProblem}
if $(\s, \nu)\in\cDM$ and  $y\in BV(T,Y)$ satisfy \eqref{relaxstate}, as well as the boundary conditions of \eqref{eq:strongProblem}.
\end{definition}

We can readily prove the following result:
\begin{proposition}
Assume that there is a relaxed arc  $(\sigma,\:\nu,\:y)$ for problem \eqref{eq:strongProblem}
such that $\|(\sigma,\:\nu)\| \leq c$ for some $c>0$.
If $y \in Y$  for every solution $\dot y=c(t,y,u)+d(t,u)$, $y(t_0)=y_0$  where $u\in L^p(T;\R^m)$
 is such that $\|i(u)\| \le c$ then the corresponding relaxation result holds as in Proposition~\ref{proposition}.
\end{proposition}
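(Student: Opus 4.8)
The plan is to transfer the equivalence of Proposition~\ref{proposition} to the state-constrained versions of \eqref{eq:OCP} and \eqref{eq:strongProblem}, the guiding idea being that, under the hypothesis, the pointwise constraint $y(t)\in Y$ costs nothing: it is automatically enforced along a minimizing sequence of controls of bounded norm and may thus be ignored. One inequality is free, since the embedding \eqref{eq:naturalImbedding} sends every classical control admissible for the constrained original problem to a relaxed arc feasible for the constrained strong problem of equal cost; hence the constrained strong infimum never exceeds the constrained original one. The whole content of the statement is therefore the reverse inequality together with the attainment and convergence assertions, and all of these reduce to showing that a well-chosen optimal relaxed arc is realized as a limit of \emph{admissible} classical controls.

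Accordingly, I would first take $(\sigma,\nu,y)$ to be an optimal solution of the constrained strong problem --- its existence following from the direct method once $Y$ is assumed closed --- and read the hypothesis as the assertion that this optimizer has norm at most $c$. By Proposition~\ref{proposition31} it is attainable in the sense of \eqref{defDM2} by a sequence $(u^k)_{k\in\N}\subset L^p(T;\R^m)$ whose classical states $y^k$, solving \eqref{approxstate}, converge weakly star in $BV(T;\R^m)$ to $y$, with $y^k(t_f)\to y(t_f)$. The decisive point is that the masses of the embedded controls converge to the mass of the limit measure: choosing the unit test function $h_0\equiv 1$, which lies in $\ContFunc(T\times\g\R^m)$ (it corresponds to $c=1$, $v_0=0$, $v_1=0$ in \eqref{compsferou}) and for which $h(t,u)=1+|u|^p$, the convergence \eqref{defDM2} gives
\[
\|i(u^k)\|=\int_T\bigl(1+|u^k(t)|^p\bigr)\,\d t\ \longrightarrow\ \int_T\!\int_{\g\R^m}\nu(\d u|t)\,\sigma(\d t)=\sigma(T)=\|(\sigma,\nu)\|\le c .
\]

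Consequently $\|i(u^k)\|\le c$ for all large $k$, so the hypothesis applies to each such control and forces the entire trajectory $y^k$ to remain in $Y$; thus $(u^k,y^k)$ is admissible for the constrained original problem. Since its cost tends to that of $(\sigma,\nu,y)$, it is a minimizing sequence, which yields (writing $J^{\mathrm{orig}}_{\mathrm{con}}$ and $J^{\mathrm{strong}}_{\mathrm{con}}$ for the two constrained infima) the bound $J^{\mathrm{orig}}_{\mathrm{con}}\le\mathrm{cost}(\sigma,\nu,y)=J^{\mathrm{strong}}_{\mathrm{con}}$ and, combined with the free inequality above, the desired equality and attainment. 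The attainability assertion of Proposition~\ref{proposition} is exactly this construction, and the subsequential-convergence assertion transfers unchanged: every minimizing sequence for the constrained problem is a fortiori minimizing for the unconstrained one, so it admits a subsequence converging in the sense of \eqref{defDM2} to a strong solution, whose limit arc moreover satisfies $y\in BV(T;Y)$ because $Y$ is closed and the convergence is weak star in $BV$.

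I expect the genuine obstacle to be the matching of the norm bound. Identity \eqref{defDM2} delivers only $\|i(u^k)\|\to\|(\sigma,\nu)\|\le c$, so in the boundary case $\|(\sigma,\nu)\|=c$ the approximants may exceed $c$ from above and the hypothesis does not literally apply to them. I would resolve this by approximating the optimizer by relaxed arcs of strictly smaller norm --- for which the attaining sequences eventually satisfy the strict bound, so that the reachability hypothesis applies verbatim --- and then passing to the limit using lower semicontinuity of the cost and closedness of $Y$ under weak-star $BV$ convergence; a rescaling $u^k\mapsto\lambda_k u^k$ with $\lambda_k\uparrow 1$ gives an alternative route, restoring the exact bound at the price of a perturbation of the generated DiPerna--Majda measure that vanishes in the limit. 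A secondary subtlety, which the same strict-norm approximation handles, is to ensure that restricting attention to arcs of norm at most $c$ does not raise the strong infimum, i.e. that the optimizer can indeed be taken within the region covered by the hypothesis.
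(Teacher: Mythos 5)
The paper offers no proof of this proposition to compare against: it is stated immediately after the sentence ``We can readily prove the following result'' and the argument is left entirely to the reader. Judged on its own, your reconstruction is the natural one and its skeleton is sound: the easy inequality via the embedding \eqref{eq:naturalImbedding}, attainability of the optimal relaxed arc through Proposition~\ref{proposition31} and \eqref{defDM2}, and --- the key observation --- convergence of the masses $\|i(u^k)\|\to\|(\sigma,\nu)\|$ obtained by testing \eqref{defDM2} with $h_0\equiv 1$, which is exactly what makes the reachability hypothesis bite and forces the approximating classical trajectories into $Y$, so that the state constraint is inactive and Proposition~\ref{proposition} transfers. This is almost certainly the argument the authors had in mind.

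Two caveats, both of which you partially flag yourself. First, the boundary case $\|(\sigma,\nu)\|=c$ is a genuine gap in the argument as written, since \eqref{defDM2} only yields $\|i(u^k)\|\to c$ and the hypothesis covers controls with $\|i(u^k)\|\le c$ exactly; your rescaling fix $u^k\mapsto\lambda_k u^k$ with $\lambda_k\uparrow 1$ does work, but to make it a proof you must verify that it perturbs the generated DiPerna--Majda measure negligibly, i.e.\ that $h_0(t,\lambda_k u)\to h_0(t,u)$ uniformly for $h_0\in\ContFunc(T\times\g\R^m)$ (which holds because dilations by $\lambda_k\to 1$ converge to the identity uniformly on the compactification $\g\R^m$, fixing the sphere at infinity) and that the weights $(1+|\lambda_k u|^p)/(1+|u|^p)$ tend to $1$ uniformly in $u$; this is routine but not automatic. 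Second, and more substantively, your reading of the hypothesis --- identifying the assumed arc with an \emph{optimizer} of the constrained strong problem --- is an interpolation: the statement only posits \emph{some} feasible arc of norm at most $c$, and nothing guarantees that the constrained optimizer obeys that bound (the coercivity \eqref{ass7} bounds the optimal mass by cost, not by $c$). The statement is most coherently read as asserting the relaxation result for the norm-constrained problem, with $\|i(u)\|\le c$ in \eqref{eq:OCP} and $\|(\sigma,\nu)\|\le c$ in \eqref{eq:strongProblem} --- consistent with the paper's $\delta=0$ remark after Assumption~\ref{th:assGrowth} and with Section~\ref{sec:extensions}, where exactly such a mass bound is added as a linear constraint. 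Under that reading your ``secondary subtlety'' disappears (the optimizer lies in the norm-$c$ region by fiat, and existence follows from weak-star compactness of that region), whereas under your literal reading the strict-norm approximation you invoke does not close the gap: if the constrained strong infimum were approached only by arcs of norm exceeding $c$, no approximation from within the norm ball would reach it.
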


\section{Weak linear program}
\label{sec:linearPrograms}

This section constructs a weak linear program for relaxing optimal control problem \eqref{eq:OCP}, and explicits its dual. For ease of exposition, we assume that both boundary conditions in \eqref{eq:OCP} are prescribed (see Section \ref{sec:extensions} on how to relax this assumption). We also assume that $Y$ is compact, since this is required anyway in the next section for manipulating measures by their moments. Finally, we assume $\delta > 0$ in Assumption \ref{th:assGrowth}, so that we do not need to explicitly handle a bound on the $L^p$ norm of the control (see Section \ref{sec:extensions} again for how such a constraint would modify the discussion below).

\subsection{Measure LP}
\label{sec:measureLP}
We now define occupation measures based on DiPerna-Majda measures, in much the same
way occupation measures can be constructed from Young measures, see e.g. \cite{Vinter1978Equivalence,sicon}.
Problem \eqref{eq:strongProblem} has relaxed control $(\sigma, \nu)$ as decision variables, both measures, but also 
the state trajectory $y$, considered now as a function of bounded variation. As such, the problem is not fully expressed as
a linear program on measures, and cannot yet be solved by our moment-SOS approach. In
this subsection, we therefore embed \eqref{eq:strongProblem} into such a linear program, via an appropriate
space-time reparametrization of the solution during the possible jumps.
First of all, notice that the ODE of strong problem \eqref{eq:strongProblem} is a measure-driven differential
equation. Let $\sigma=\sigma_C+\sigma_D$ denote the Lebesgue decomposition of measure $\sigma$,
with its continuous (absolutely or singularly w.r.t. the Lebesgue measure) part $\sigma_C$ and its discrete part $\sigma_D$
supported on at most countably many points of $J:=(t_j)_{j\in\N} \subset T$.
When $\sigma$ is continuous, so is the state trajectory $y$. When it is discrete, one may construct a
properly defined space-time reparametrization of  the state trajectory ``during'' the jumps,
see \cite{motta1995space,miller2003impulsive,dykhta2010hamilton}:

\begin{definition}[Space-time reparametrization]
\label{th:defReparam}
A function $y \in BV (T;\R^n)$  is a
solution to the ODE in \eqref{eq:strongProblem} if it satisfies
\[
y(\tau^+) = y(t_0^-) + \int_{t_0}^{\tau} \int_{\gamma \R^m}  \frac{ f(t,y(t),\ubar)}{1 + \lvert \ubar \rvert^p} \, \nu( \diff \ubar |t) \, \sigma_C(\diff t) 
+ \sum_{s\in J  \cap [t_0,\tau]} \left( y(s^+) -  y(s^-) \right)
\]
where $y(s^+)$ and $y(s^-)$ are boundary conditions of the following ODE:
\begin{equation}
\label{eq:limitingSystem}
\begin{gathered}
\dot y_s(t) =  \int_{\gamma \R^m} \frac{ f(s,y_s(t),\ubar)}{1 + \lvert \ubar \rvert^p} \, \nu (\diff \ubar | t), \quad t \in [0, \sigma_D(\lbrace s \rbrace )] \\
y_s(0) = y(s^-), \quad y_s(\sigma_D(\{s\})) = y(s^+)
\end{gathered}
\end{equation}
with $y_s$ a fictitious state that evolves ``during'' the jump at $s\in J$.

\end{definition}
Here $y(t^\pm)$ represent left and right limits of $y(t)$, and we implicitly extend $y(t)$ by a constant
function outside of the interval $T$ should jump discontinuities arise at its boundaries.
Note that during jumps, we follow for the rest of the paper the convention that the state $y_s$ remains
in $Y$, that is, we forbid arbitrarily fast violations of the state constraints
during jumps.
Given Proposition~\ref{proposition31}, the mere definition of relaxed control $(\sigma,\nu)$ guarantees
the uniqueness of $y$, and hence of the space-time reparametrization as well. The reason
to introduce Definition~\ref{th:defReparam} is therefore not motivated by a desire for a well-defined concept of
trajectory as in impulsive optimal control, but as a necessary step for defining an appropriate concept
of occupation measure. Indeed, fix an admissible relaxed arc $(\sigma,\nu,y)$ and its associated
space-time reparametrization $(y_s)_{s \in J}$, and define a measure
\begin{equation}
\xi(B|\ubar, s) = \begin{cases} \delta_{y(s)}(B) & \text{if}\: s \notin J \\
\int_0^{\sigma(\lbrace s \rbrace)} \frac{I_{B}(y_s(t))}{\sigma(\lbrace s \rbrace )} \, \d t &  \text{if}\: s \in J \end{cases}
\end{equation}
for any Borel set $B \subset Y$, where $I_B(y)$ denotes the indicator function equal to $1$ if $y \in B$
and $0$ otherwise. Note that the normalization by $\sigma(\lbrace s  \rbrace )$ ensures
that $\xi(B|s)$ is a probability measure. The notation $\xi(dy | \ubar,s)$ indicates that
$\xi$ is a conditional probability measure depending on control $u$ (through the DiPerna-Majda
measure $(\sigma,\nu)$) and time $s$.

\begin{definition}[Occupation measure]
\label{th:defOccpMeas}
The occupation measure $\mu \in \PositiveMeasures(T \times Y \times \gamma \R^m)$ associated to
a given admissible relaxed arc $(\sigma, \nu, y)$  is defined by
\begin{equation}
    \diff \mu(t, \ybar, \ubar) := \xi(\diff \ybar | \ubar, t) \, \nu(\diff \ubar | t) \, \sigma(\diff t)
\end{equation}
\end{definition}

The following essential property reveals that the ODE in \eqref{eq:strongProblem} gives rise to linear constraints:
\begin{proposition}
\label{th:weakConstraints}
Let $\mu$ be the occupation measure associated to admissible relaxed arc $(\sigma, \nu, y)$.
Then for all test functions $v(t,\ybar) \in \ContFunc^1(T \times Y)$ it holds:
\begin{equation}
\label{eq:weakDynamics}
\langle \frac{\partial v}{\partial t} \frac{1}{1 + \lvert \ubar \rvert^p}+ \frac{\partial v}{\partial \ybar} \frac{ f(t,\ybar,\ubar)}{1 + \lvert \ubar \rvert^p}, \mu \rangle = v(t_f,y(t_f^+)) - v(t_0,y(t_0^-)).
\end{equation}
\end{proposition}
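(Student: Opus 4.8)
The plan is to read the left-hand side as the total variation of the scalar map $\phi(t):=v(t,y(t))$ along the relaxed arc, and to match it term by term with the right-hand side through a fundamental-theorem-of-calculus argument for $BV$ functions, handling separately the continuous evolution and the jumps encoded by the space-time reparametrization of Definition~\ref{th:defReparam}. Throughout I would expand $\langle\,\cdot\,,\mu\rangle$ via Definition~\ref{th:defOccpMeas} and split $\s=\s_C+\s_D$.

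The first thing I would establish is a time-renormalization identity that anchors the whole computation. Applying \eqref{defDM2} to $h_0(t,u)=\psi(t)/(1+|u|^p)$ for arbitrary $\psi\in\ContFunc(T)$ — which belongs to $\ContFunc(T\times\g\R^m)$ since $u\mapsto 1/(1+|u|^p)$ lies in ${\cal R}$ and extends by $0$ on $\g\R^m\setminus\R^m$ — one gets $h=\psi$ and hence
\[
\int_T\int_{\g\R^m}\frac{\psi(t)}{1+|u|^p}\,\nu(\d u|t)\,\s(\d t)=\int_T\psi(t)\,\d t .
\]
This says that the pushforward onto $T$ of $(1+|u|^p)^{-1}\nu(\d u|t)\s(\d t)$ is Lebesgue measure. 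Two consequences are exactly what I need: the atoms of $\s$ carry no physical time, so for each $s\in J$ the conditional $\nu(\cdot|s)$ must be supported on $\g\R^m\setminus\R^m$ where $1/(1+|u|^p)$ vanishes; and on the continuous part $\int_{\g\R^m}(1+|u|^p)^{-1}\nu(\d u|t)\,\s_C(\d t)=\d t$.

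Next I would treat the two parts. On $\s_C$ one has $\xi(\d y|u,t)=\delta_{y(t)}$, so integrating out $y$ and using the identity above together with the integral form of the dynamics in Definition~\ref{th:defReparam} turns the two summands into $\tfrac{\partial v}{\partial t}(t,y(t))\,\d t$ and $\tfrac{\partial v}{\partial y}(t,y(t))\,(\d y)_C$; by the chain rule for $BV$ functions this is precisely the continuous part of the variation of $\phi$. For the discrete part, the cancellation of the $1/\s(\{s\})$ normalization in $\xi$ against $\s_D(\{s\})$ leaves, for each atom $s\in J$,
\[
\int_{\g\R^m}\int_0^{\s_D(\{s\})}\Big[\tfrac{\partial v}{\partial t}(s,y_s(\tau))\tfrac{1}{1+|u|^p}+\tfrac{\partial v}{\partial y}(s,y_s(\tau))\tfrac{f(s,y_s(\tau),u)}{1+|u|^p}\Big]\,\d\tau\,\nu(\d u|s).
\]
Integrating in $u$ first, the time-derivative term drops because $\nu(\cdot|s)$ charges only $\g\R^m\setminus\R^m$, and the surviving term equals $\int_0^{\s_D(\{s\})}\tfrac{\partial v}{\partial y}(s,y_s(\tau))\dot y_s(\tau)\,\d\tau$ by the limiting system \eqref{eq:limitingSystem}; since $t=s$ is frozen, the ordinary chain rule delivers the jump $v(s,y(s^+))-v(s,y(s^-))$ of $\phi$. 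Summing the continuous contribution and all the jumps telescopes to $\phi(t_f^+)-\phi(t_0^-)=v(t_f,y(t_f^+))-v(t_0,y(t_0^-))$.

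The hard part will be entirely in the jump analysis. The crux is justifying rigorously that the time-derivative term really vanishes over the atoms, i.e. that jumps correspond to concentration of the control at infinity; this is where the renormalization identity does the decisive work. Secondary technical points are invoking the $BV$ (Vol'pert) chain rule for the composition $v(t,y(t))$, and exchanging the countable sum over $J$ with the integration, which is legitimate because $\|(\s,\nu)\|<\infty$ makes the jumps absolutely summable. An alternative, possibly shorter, route would be to write the classical identity for an approximating sequence $u^k$ attaining $(\s,\nu)$ and pass to the limit using Proposition~\ref{proposition31} and the convergence \eqref{defDM2}; there the growth bounds of Assumption~\ref{th:assGrowth} are precisely what is required to pass the nonlinear term $\tfrac{\partial v}{\partial y}(t,y^k)\,f(t,y^k,u^k)$ to the limit.
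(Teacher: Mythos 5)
Your proposal is correct and follows essentially the same route as the paper's proof: the $BV$ chain rule decomposition of $v(t,y(t))$ into a time term, a continuous spatial term, and a jump sum, with the identity $\diff t=\int_{\g\R^m}(1+|\ubar|^p)^{-1}\nu(\diff\ubar|t)\,\s(\diff t)$ (the paper's invocation of \eqref{defDM2} with $h_0=1/(1+|\ubar|^p)$) handling the time term, the dynamics handling the continuous part, and the reparametrized system \eqref{eq:limitingSystem} with the cancellation of the $1/\s(\{s\})$ normalization handling the atoms. Your explicit observation that the atomic conditionals $\nu(\cdot|s)$ must charge only $\g\R^m\setminus\R^m$, so the time-derivative term vanishes on jumps, is a point the paper leaves implicit when pairing $\frac{\partial v}{\partial t}\frac{1}{1+|\ubar|^p}$ with the full measure $\mu$ rather than $\mu_C$, and it is a welcome clarification rather than a departure.
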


\begin{proof}
By the chain rule in BV (\cite[Th.~3.96]{Ambrosio2000BV}), differentiating such test functions along the admissible trajectory leads to
\begin{equation}
\begin{gathered}
v(t_f,y(t_f^+)) - v(t_0,y(t_0^-)) = \int_T \diff v(t,y(t)) = \\
\underbrace{\int_T \! \frac{\partial v}{\partial t}(t,y(t)) \, \diff t}_{:=a_1}
+ \underbrace{\int_T \! \frac{\partial v}{\partial \ybar}(t,y_C(t)) \, \diff y_C(t)}_{:=a_2}
+ \underbrace{\sum_{s \in J} \left( v(s,y(s^+)) - v(s,y(s^-)) \right)}_{:=a_3}
\end{gathered}
\end{equation}
where $y_C$ denotes the continuous part of the Lebesgue decomposition of $y$ w.r.t. time.
By \eqref{defDM2} with $v=g=1$, $\diff t = \int_{\gamma \R^m} \frac{1}{1 + \lvert \ubar \rvert^p} \nu(\diff \ubar | t) \, \sigma(\diff t)$, so that the first term is equal to
\begin{equation}
a_1 = \langle  \frac{\partial v}{\partial t}(t,\ybar)  \frac{1}{1 + \lvert \ubar \rvert^p}, \mu \rangle.
\end{equation}
By Definition~\ref{th:defOccpMeas} and the fact that $\diff y(t) = \int_{\gamma \R^m} \frac{1}{1 + \lvert \ubar \rvert^p} \nu(\diff \ubar | t) \, \sigma(\diff t)$,
the second term is  equal to
\begin{equation}
a_2 = \langle \frac{\partial v}{\partial \ybar} \frac{ f(t,\ybar,\ubar)}{1 + \lvert \ubar \rvert^p}, \mu_C \rangle,
\end{equation}
where $\mu_C$ is the continuous part of the Lebesgue decomposition of $\mu$ w.r.t. time.
Finally, the last term can now be evaluated along graph completions using \eqref{eq:limitingSystem}:
\begin{equation}
\begin{aligned}
a_3 & = \sum_{s \in J} \int_0^{\sigma(\lbrace s \rbrace )} \diff v(t) \\
       & =  \sum_{s \in J} \int_0^{\sigma(\lbrace s \rbrace )}  \int_{\gamma \R^m} \frac{\partial v}{\partial \ybar}(s,y_s(t)) \, \frac{ f(s,y_s(t),\ubar)}{1 + \lvert \ubar \rvert^p} \, \nu( \diff \ubar | s) \, \d t  \\
&=  \sum_{s \in J} \int_{Y}  \int_{\gamma \R^m} \frac{\partial v}{\partial \ybar}(s,\ybar) \, \frac{ f(s,\ybar,\ubar)}{1 + \lvert \ubar \rvert^p} \,  \xi(\diff \ybar | \ubar, s) \, \nu(\diff \ubar | s)  \, \sigma(\lbrace s \rbrace ) \\  
    & =\langle   \frac{\partial v}{\partial \ybar} \frac{ f(t,\ybar,\ubar)}{1 + \lvert \ubar \rvert^p}, \mu_D \rangle,
\end{aligned}
\end{equation} 
where $\mu_D$ is the discrete part of the Lebesgue decomposition of $\mu$ w.r.t. time.
Since $\mu=\mu_C+\mu_D$, this concludes the proof.
\end{proof}

Proposition \ref{th:weakConstraints} suggests to relax strong problem \eqref{eq:strongProblem} as a linear program on measures, called hereafter the ``weak'' problem:
\begin{equation}
\label{eq:weakProblem}
\begin{aligned}
p_W^* = \inf_\mu \;\; & \langle \frac{l(t,\ybar,\ubar)}{1+ \lvert \ubar \rvert^p}, \mu \rangle  \\
\text{s.t.} \;\; &  \langle \frac{\partial v}{\partial t} \frac{1}{1+\lvert \ubar \rvert^p}  + \frac{\partial v}{\partial \ybar} \frac{f(t,\ybar,\ubar)}{1+\lvert \ubar \rvert^p}, \mu \rangle=v(t_f,y(t_f^+)) - v(t_0,y(t_0^-)), \quad \forall v \in \ContFunc^1(T \times Y)\\
& \mu \in \PositiveMeasures(T \times Y \times \gamma \R^m).
\end{aligned}
\end{equation}
Obviously, $p_W^*$ is smaller than or equal to the infimum of \eqref{eq:strongProblem}.
Note that it is conjectured that the values actually agree, as is the case for bounded controls,
see \cite{Vinter1978Equivalence,Vinter1993ConvexDuality}. In the next section, \eqref{eq:weakProblem} is further relaxed to obtain a finite-dimensional,
tractable problem. The absence of a relaxation gap is then simply tested a posteriori by
observing that the finite-dimensional solution converges to a solution of \eqref{eq:OCP}. See also the examples in Section~\ref{sec:examples}.

\subsection{Dual conic LP}
\label{sec:duality}

Before investigating the practical implications on semi-definite relaxations of measure LP \eqref{eq:weakProblem},
we explore its conic dual. This is an interesting result in its own right for so-called
``verification theorems'' which supply necessary and sufficient conditions in the form of
more traditional Hamilton-Jacobi-Bellman (HJB) inequalities.
In addition, practical numerical resolution of the semidefinite programming (SDP)
relaxations by primal/dual interior-point methods \cite{SeDuMi} implies that a strengthening of this
dual will be solved as well, as shown in Section~\ref{sec:SOS}.

In this section, it is first shown that the solution of \eqref{eq:weakProblem} is attained whenever an admissible
solution exists. Then, the dual problem of \eqref{eq:weakProblem}, in the sense of conic duality, is presented.
This leads directly to a HJB-type inequality. Although, the value of this problem might not
be attained, it is however shown that there is no duality gap between the conic programs.

First of all, we establish that whenever the optimal value of \eqref{eq:weakProblem} is finite, there exists a
vector of measures attaining the value of the problem:
\begin{lemma}
\label{th:solAttained}
If $p_W^*$ is finite in problem \eqref{eq:weakProblem}, there exists an admissible $\mu$ attaining the
infimum, viz. such that $\langle \frac{l}{1+\lvert \ubar \rvert^p}, \mu \rangle = p^*_W$.
\end{lemma}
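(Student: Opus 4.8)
The statement is a standard existence-of-minimizer result for a linear program over a space of measures, so the plan is to invoke the direct method of the calculus of variations in the weak-star topology. First I would take a minimizing sequence $(\mu_k)_{k\in\N} \subset \PositiveMeasures(T \times Y \times \gamma\R^m)$ of admissible measures, i.e. each $\mu_k$ satisfies the linear dynamic constraint in \eqref{eq:weakProblem} and $\langle \frac{l}{1+|\ubar|^p}, \mu_k\rangle \to p_W^*$. The crucial preliminary observation is that $T \times Y \times \gamma\R^m$ is compact: $T$ is a closed bounded interval, $Y$ is assumed compact in this section, and $\gamma\R^m$ is the DiPerna--Majda compactification of $\R^m$, homeomorphic to a ball as recalled after \eqref{compsferou}. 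Hence $\ContFunc(T \times Y \times \gamma\R^m)$ is a separable Banach space and its dual, the space of measures, enjoys the Banach--Alaoglu theorem in the weak-star topology.

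The next step is to establish a uniform bound on the masses $\|\mu_k\|$, so that Banach--Alaoglu applies. This is where I expect the main obstacle to lie, and it is precisely what the coercivity assumption \eqref{ass7} is designed to handle. Testing the dynamic constraint with the function $v(t,\ybar)=t$ gives $\langle \frac{1}{1+|\ubar|^p}, \mu_k\rangle = t_f - t_0$, which controls the ``finite'' part of the mass. For the remaining mass one uses \eqref{ass7}, namely $l(t,\ybar,\ubar) \ge \delta|\ubar|^p$ with $\delta>0$: since the cost $\langle \frac{l}{1+|\ubar|^p},\mu_k\rangle$ is bounded along the minimizing sequence, and $\frac{\delta|\ubar|^p}{1+|\ubar|^p} + \frac{1}{1+|\ubar|^p} \ge \min(\delta,1) > 0$ on all of $\gamma\R^m$ (including the boundary at infinity, where $\frac{|\ubar|^p}{1+|\ubar|^p}$ extends continuously to $1$), one obtains a uniform bound $\|\mu_k\| \le C$. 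By Banach--Alaoglu there is a subsequence $\mu_{k_j} \rightharpoonup^* \mu^\star$ with $\mu^\star \in \PositiveMeasures(T \times Y \times \gamma\R^m)$, the cone of non-negative measures being weak-star closed.

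It then remains to check that $\mu^\star$ is admissible and optimal. Admissibility is immediate from weak-star convergence: each constraint in \eqref{eq:weakProblem} is of the form $\langle \varphi_v, \mu\rangle = r_v$ for a fixed continuous test integrand $\varphi_v(t,\ybar,\ubar) = \frac{\partial v}{\partial t}\frac{1}{1+|\ubar|^p} + \frac{\partial v}{\partial \ybar}\frac{f(t,\ybar,\ubar)}{1+|\ubar|^p}$ and a constant $r_v$; here one must verify that $\varphi_v \in \ContFunc(T \times Y \times \gamma\R^m)$, which follows from the growth conditions \eqref{ass3} and \eqref{ass6} ensuring $\frac{f}{1+|\ubar|^p}$ extends continuously to the compactification, so that $\langle \varphi_v,\mu_{k_j}\rangle \to \langle \varphi_v,\mu^\star\rangle = r_v$ for every admissible test function $v$. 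Optimality follows from the same continuity applied to the cost integrand $\frac{l}{1+|\ubar|^p}$, which is continuous on the compactified domain by \eqref{ass2}: weak-star convergence yields $\langle \frac{l}{1+|\ubar|^p}, \mu^\star\rangle = \lim_j \langle \frac{l}{1+|\ubar|^p}, \mu_{k_j}\rangle = p_W^*$. Since $\mu^\star$ is admissible, its cost cannot be below $p_W^*$, and the displayed equality shows it equals $p_W^*$, so the infimum is attained. The one point requiring care throughout is that the whole construction exploits the compactification: the integrands must be genuinely continuous up to the boundary at infinity, which is exactly the role of hypotheses \eqref{ass2}, \eqref{ass3} and \eqref{ass6}.
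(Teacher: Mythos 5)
Your proof is correct and follows essentially the same route as the paper's (much terser) argument: a uniform mass bound from the coercivity assumption \eqref{ass7} together with the constraint $\langle \frac{1}{1+|\ubar|^p},\mu_k\rangle = t_f-t_0$, then Banach--Alaoglu on the compact set $T\times Y\times\gamma\R^m$, with weak-star convergence preserving both admissibility and the cost. Your version is in fact more complete than the paper's sketch, which does not explicitly verify admissibility or optimality of the limit; the only nitpick is that the continuous extension of $f/(1+|\ubar|^p)$ to the sphere at infinity rests on the sub-$p$ growth in \eqref{ass1} (together with \eqref{ass6}) rather than on \eqref{ass3}.
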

\begin{proof}
Observe that by coercivity of the cost, the mass of $\mu$ is bounded. Following
Alaoglu's theorem \cite[\S15.1]{Royden}, the unit ball in the vector space of compactly supported
measures is compact in the weak star topology. Therefore, any sequence of admissible solutions
for \eqref{eq:weakProblem} possesses a converging subsequence. Since this must be true for any
sequence, this is true for any minimizing sequence, which concludes the proof.
\end{proof}

Now, remark that \eqref{eq:weakProblem} can be seen as an instance of a conic program, called hereafter the
primal, in standard form (see for instance \cite{Barvinok2002convexity}):
\begin{equation}
\label{eq:primal}
\begin{array}{rcll}
p_W^* & = & \displaystyle \inf\limits_{x_p} & \displaystyle \langle  x_p, c \rangle_p  \\
&& \mathrm{s.t. }  & \mathcal{A} \, x_p = b, \\
&&& x_p \in  E^+_p
\end{array}
\end{equation}
with decision variable $x_p := \mu \in E_p := \SignedMeasures(T \times Y \times \gamma \R^m)$, and cost
$c := \frac{l}{1+\lvert \ubar \rvert^p} \in F_p := \ContFunc(T \times Y \times \gamma \R^m)$.
The notation $\langle  x_p, c \rangle_p$ refers to the duality between $E_p$ and $F_p$.
The cone $E^+_p$ is the non-negative orthant of $E_p$.
The linear operator $\mathcal{A}:E_p \rightarrow [\ContFunc^1(T \times Y)]'$ is the adjoint operator of
$\mathcal{A}': \ContFunc^1(T \times Y) \rightarrow \ContFunc(T \times Y \times \gamma \R^m)$ defined by
\begin{equation}
v \mapsto \mathcal{A}' v := \frac{\partial v}{\partial t} \frac{1}{1+\lvert \ubar \rvert^p}+  \frac{\partial v}{\partial \ybar} \frac{f(t,\ybar,\ubar)}{1+\lvert \ubar \rvert^p}.
\end{equation}
The right hand side is $b := \delta_{(t_f,y(t_f))}(\diff t \diff \ybar) - \delta_{(t_0,y(t_0))}(\diff t \diff \ybar)   \in E_d :=  [\ContFunc^1(T \times Y)]' $.

\begin{lemma}
The conic dual of \eqref{eq:weakProblem} is given by
\begin{equation}
\label{eq:dualExplicit}
\begin{array}{rcll}
d^* & = & \sup\limits_{v} & v(t_f,y(t_f)) - v(t_0,y(t_0))  \\
&& \mathrm{s.t.}  & \frac{l(t,\ybar,\ubar)}{1+\lvert \ubar \rvert^p} - \mathcal{A}' v(t,\ybar) \geq 0, \:\:\forall \: (t,\ybar,\ubar) \in  T \times Y \times \gamma \R^m, \\
&&& v \in \ContFunc^1(T \times Y).
\end{array}
\end{equation}
\end{lemma}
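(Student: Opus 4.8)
The plan is to treat the primal program \eqref{eq:primal} as an abstract conic program in standard form and apply the textbook recipe for conic Lagrangian duality (see \cite{Barvinok2002convexity}). For a primal of the form $\inf_{x_p} \langle x_p, c \rangle_p$ subject to $\mathcal{A} x_p = b$ and $x_p \in E_p^+$, the associated conic dual reads $\sup_v \langle b, v \rangle_d$ subject to $c - \mathcal{A}' v \in (E_p^+)^*$, where the dual variable $v$ ranges over the predual of the space $E_d$ in which $b$ lives, namely $\ContFunc^1(T \times Y)$, and where $(E_p^+)^*$ denotes the dual cone of $E_p^+$ under the pairing $\langle \cdot,\cdot \rangle_p$ between $E_p$ and $F_p$. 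Since $\mathcal{A}$ has already been introduced as the adjoint of $\mathcal{A}'$, the defining relation $\langle \mathcal{A} x_p, v \rangle_d = \langle x_p, \mathcal{A}' v \rangle_p$ is available, so the entire task reduces to making two objects explicit: the dual objective $\langle b, v \rangle_d$ and the dual cone $(E_p^+)^*$.

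For the objective, I would simply evaluate $b$ against the test function. Because $b = \delta_{(t_f, y(t_f))} - \delta_{(t_0, y(t_0))}$ acts on $v \in \ContFunc^1(T \times Y)$ by point evaluation, one obtains $\langle b, v \rangle_d = v(t_f, y(t_f)) - v(t_0, y(t_0))$, which is exactly the objective in \eqref{eq:dualExplicit}. For the constraint, I would identify the dual cone: since $E_p^+ = \PositiveMeasures(T \times Y \times \g\R^m)$ and the pairing is integration of a continuous function against a measure, $(E_p^+)^*$ is the set of functions in $F_p = \ContFunc(T \times Y \times \g\R^m)$ that integrate nonnegatively against every nonnegative measure, i.e. the cone $\ContFunc^+(T \times Y \times \g\R^m)$ of pointwise-nonnegative continuous functions. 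Substituting $c = l/(1 + |\ubar|^p)$ and the explicit form of $\mathcal{A}' v$ into $c - \mathcal{A}' v \geq 0$ then recovers precisely the inequality constraint of \eqref{eq:dualExplicit}.

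The step requiring care, and the reason the compactness hypotheses of this section matter, is the identification $(E_p^+)^* = \ContFunc^+(T \times Y \times \g\R^m)$. This rests on compactness of the underlying space: $T$ and $Y$ are compact by assumption, and crucially $\g\R^m$ is compact, being the DiPerna--Majda compactification homeomorphic to a ball. On a compact set the Riesz representation theorem makes $\SignedMeasures$ the topological dual of $\ContFunc$, and a continuous function that integrates nonnegatively against every nonnegative measure (in particular against every Dirac mass) must itself be pointwise nonnegative. This is exactly the property that would fail on a noncompact domain, and it is the whole reason the relaxation is posed on $\g\R^m$ rather than on $\R^m$. Finally, I would check the consistency of the dual pairings, namely that $b$ genuinely defines an element of $E_d = [\ContFunc^1(T \times Y)]'$ and that $\mathcal{A}$ and $\mathcal{A}'$ are truly adjoint under the two pairings, so that the abstract recipe applies without modification; once these are verified, \eqref{eq:dualExplicit} follows directly. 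I do not expect strong duality to be addressed here, as the absence of a duality gap is treated separately later in the section.
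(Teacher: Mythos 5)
Your proposal is correct and follows essentially the same route as the paper, which likewise invokes standard conic duality (citing Anderson--Nash and Barvinok) for the abstract pair and then simply instantiates $b$, $\mathcal{A}'$, and the cone $F_p^+$; the paper's proof is in fact a one-liner, so your explicit identification of the dual cone with $\ContFunc^+(T\times Y\times\g\R^m)$ and the pairing checks are just a more detailed rendering of the same argument. One small refinement: testing against Dirac masses yields pointwise nonnegativity on any domain; what compactness of $\g\R^m$ actually secures is the Riesz duality $\SignedMeasures = \ContFunc'$ underlying the pairing itself, not the dual-cone computation per se.
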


\begin{proof}
Following standard results of conic duality (see \cite{Anderson} or \cite{Barvinok2002convexity}), the conic dual of \eqref{eq:primal} is given by
\begin{equation}
\label{dual}
\begin{array}{rcll}
d^* & = & \sup\limits_{x_d} & \langle b, x_d \rangle_d  \\
&& \mathrm{s.t.}  & c-\mathcal{A}' x_d \in F_p^+, \\
&&& x_d \in E_d
\end{array}
\end{equation}
where decision variable $x_d :=  v \in F_d := \ContFunc^1(T \times Y)$ and (pre-)dual cone $F_p^+$ is the positive orthant of $F_p$.
The notation $\langle  b, x_d \rangle_d$ refers to the duality between $E_d$ and $F_d$.
The lemma just details this dual problem.
\end{proof}

Once the duality relationship established, the question arises of whether a duality gap may 
occur between linear problems \eqref{eq:dualExplicit} and \eqref{eq:weakProblem}. The following theorem discards such a
possibility:
\begin{theorem}
\label{th:noGap}
There is no duality gap between \eqref{eq:dualExplicit} and \eqref{eq:weakProblem}: if there is an admissible
vector for \eqref{eq:dualExplicit}, then
\begin{equation}
p_W^* = d^*.
\end{equation}
\end{theorem}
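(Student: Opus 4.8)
The plan is to derive $p_W^* = d^*$ from a standard zero-duality-gap criterion for infinite-dimensional conic programs, rather than by an explicit construction. Weak duality always gives $d^* \le p_W^*$, so only the reverse inequality is at stake, and dual feasibility (the hypothesis) guarantees $d^* > -\infty$, so that the common value will be meaningful. Following the infinite linear programming theory of \cite[Ch.~3]{Anderson} (see also \cite{Barvinok2002convexity}), it suffices to show that the constraint cone
\[
\mathcal{K} := \left\{ \left( \mathcal{A}\mu, \; \langle c, \mu \rangle_p \right) \;:\; \mu \in E^+_p \right\} \subseteq E_d \times \R
\]
is closed in the weak-star topology: closedness of $\mathcal{K}$ makes the primal value function weak-star lower semicontinuous at the right-hand side $b$, which is precisely the condition excluding a gap (and forcing attainment in the finite case, consistent with Lemma~\ref{th:solAttained}).

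To prove closedness I would take a sequence $(\mathcal{A}\mu_n, \langle c, \mu_n\rangle_p)$ in $\mathcal{K}$ converging weak-star to some $(\beta, r) \in E_d \times \R$ and produce a $\mu \in E^+_p$ with $\mathcal{A}\mu = \beta$ and $\langle c, \mu\rangle_p = r$. The decisive step is a uniform bound on the total masses $\langle 1, \mu_n\rangle$. Testing the functional $\mathcal{A}\mu_n$ against $v(t,\ybar) = t \in \ContFunc^1(T \times Y)$ gives $\mathcal{A}'v = \frac{1}{1+|\ubar|^p}$, whence $\langle \frac{1}{1+|\ubar|^p}, \mu_n\rangle = \langle \mathcal{A}\mu_n, v\rangle \to \langle \beta, v\rangle$ is bounded. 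Coercivity \eqref{ass7} with $\delta > 0$ yields $c = \frac{l}{1+|\ubar|^p} \ge \delta\,\frac{|\ubar|^p}{1+|\ubar|^p} \ge 0$, so from the convergence of $\langle c, \mu_n\rangle_p$ the quantities $\langle \frac{|\ubar|^p}{1+|\ubar|^p}, \mu_n\rangle \le \frac{1}{\delta}\langle c, \mu_n\rangle_p$ are bounded as well. Since $\langle 1, \mu_n\rangle = \langle \frac{1}{1+|\ubar|^p}, \mu_n\rangle + \langle \frac{|\ubar|^p}{1+|\ubar|^p}, \mu_n\rangle$, the masses are uniformly bounded.

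Because $T \times Y \times \g\R^m$ is compact — this is exactly where the DiPerna--Majda compactification $\g\R^m$ and the standing compactness of $Y$ are indispensable — the bounded measures $\mu_n$ lie in a weak-star compact (and metrizable) ball, so Alaoglu's theorem \cite[\S15.1]{Royden} furnishes a subsequence with $\mu_n \tow \mu$, $\mu \in E^+_p$. By construction of the conic program, both the cost $c$ and every $\mathcal{A}'v$ belong to $F_p = \ContFunc(T \times Y \times \g\R^m)$, i.e. they are genuinely continuous on the compact domain, the extension to $\g\R^m$ being ensured by Assumption~\ref{th:assGrowth} (in particular \eqref{ass2} and \eqref{ass6} and the defining property of the subring ${\cal R}$). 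Passing to the weak-star limit therefore gives $\langle c, \mu\rangle_p = r$ and $\langle \mathcal{A}'v, \mu\rangle = \langle \beta, v\rangle$ for every $v \in \ContFunc^1(T \times Y)$, that is $\mathcal{A}\mu = \beta$. Hence $(\beta, r) \in \mathcal{K}$, so $\mathcal{K}$ is closed and no duality gap remains.

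I expect the mass bound of the second paragraph to be the main obstacle, since it is the sole place where coercivity ($\delta > 0$) is truly needed: without it, mass could escape either to infinity in the control variable — here blocked by the compactification — or through unbounded total variation — here blocked by \eqref{ass7} — and in either case $\mathcal{K}$ would fail to be closed and a gap could open. This is also why the standing assumption $\delta > 0$ is flagged at the start of Section~\ref{sec:linearPrograms}, the case $\delta = 0$ being deferred to the explicit $L^p$-norm bound discussed in Section~\ref{sec:extensions}.
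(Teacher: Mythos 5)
Your proof is correct and follows essentially the same route as the paper: both reduce the absence of a duality gap to weak-star closedness of the constraint cone via the Anderson--Nash criterion \cite[Th.~3.10]{Anderson}, extract a convergent subsequence by a uniform mass bound plus Alaoglu's theorem, and pass to the limit using continuity of $c$ and of each $\mathcal{A}'v$ on the compact set $T \times Y \times \gamma\R^m$. If anything, your execution is slightly more careful at one step: the paper delegates the mass bound to Lemma~\ref{th:solAttained}, which is phrased for admissible sequences, whereas you derive it for arbitrary sequences in the cone --- which is what closedness actually requires --- by testing against $v(t,\ybar)=t$ to control $\langle \frac{1}{1+\lvert\ubar\rvert^p},\mu_n\rangle$ and invoking coercivity \eqref{ass7} with $\delta>0$ to control $\langle \frac{\lvert\ubar\rvert^p}{1+\lvert\ubar\rvert^p},\mu_n\rangle$.
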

\begin{proof}
Following \cite[Th. 3.10]{Anderson} (see also the exposition in \cite[\S~4]{Barvinok2002convexity}), it is enough to show
that the
weak-star closure of the cone $C:=\left\lbrace (\langle x_p, c \rangle_p , \mathcal{A} \, x_p) : \; x_p \in E^+_p \right\rbrace$
belongs to $\R \times E_d$.

To prove closure, one may show that, for any sequence of admissible solutions $(x^k_p)_{k \in \N}$, all accumulation points of
$(\langle x^k_p, c \rangle_p , \mathcal{A} \, x^k_p)_{k \in \N}$ belong to $C$. Note that Lem.~\ref{th:solAttained} establishes that any sequence $(x^k_p)_{k \in \N}$
has a converging subsequence. Therefore, all that is left to show is the weak-star continuity of $\mathcal{A}$. Following \cite{sicon}, this can be shown by noticing that
$\mathcal{A}'$ is continuous for the strong topology of $\ContFunc^1(T \times Y)$, hence for its associated weak topologies.
Operators $\mathcal{A}$ is therefore weakly-star continuous, and each sequence $(\langle x^k_p, c \rangle_p , \mathcal{A} x^k)_{k \in \N}$
converges in $C$, which concludes the proof.
\end{proof}

Note that what is not asserted in Theorem~\ref{th:noGap} is the existence of a continuously differentiable function for which
the optimal cost is attained in dual problem \eqref{eq:dualExplicit}. Indeed, it is a well-known fact that
value functions of optimal control problems, to which $v$ is closely related, may not be
continuous, let alone continuously differentiable. However, there does exist an admissible
vector of measures for which the optimal cost of primal \eqref{eq:weakProblem} is attained (whenever there
exists an admissible solution), following Lem.~\ref{th:solAttained}. This furnishes practical motivations for
approaching the problem via its primal on measures, as the dual problem will be solved
anyway as a side product, see the later sections of this article.

\begin{corollary}
\label{th:CNSglobal}
Let $\mu$ be admissible for \eqref{eq:weakProblem}. Then, there exists a sequence $(v^k)_{k \in \N} \in \ContFunc^1(T \times Y )$, with each element admissible for \eqref{eq:dualExplicit} and such that
\begin{equation}
\lim_{k \rightarrow \infty} \langle \frac{l(t,\ybar,\ubar)}{1+\lvert \ubar \rvert^p} - \mathcal{A} v^k, \mu \rangle  = 0,
\end{equation}
if and only if $\mu$ is a solution of \eqref{eq:weakProblem}.
\end{corollary}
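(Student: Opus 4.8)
The plan is to read this corollary as a complementary-slackness statement distilled from the conic duality already established, whose only substantial input is the absence of a duality gap furnished by Theorem~\ref{th:noGap}. The forward-looking strategy is therefore to first isolate a single algebraic identity relating the integrand $\frac{l}{1+|\ubar|^p}-\mathcal{A}'v$ to the primal and dual objective values, and then let both implications fall out of weak duality and zero gap.

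First I would record the central identity. For any $\mu$ admissible for \eqref{eq:weakProblem} and any $v\in\ContFunc^1(T\times Y)$, the adjoint relationship between $\mathcal{A}$ and $\mathcal{A}'$ together with the primal constraint $\mathcal{A}\mu=b$ gives
\[
\langle \mathcal{A}'v,\mu\rangle_p = \langle \mathcal{A}\mu, v\rangle_d = \langle b,v\rangle_d = v(t_f,y(t_f)) - v(t_0,y(t_0)),
\]
the last step being the definition of $b$. Writing $p(\mu):=\langle \frac{l}{1+|\ubar|^p},\mu\rangle$ for the primal cost and $d(v):=v(t_f,y(t_f))-v(t_0,y(t_0))$ for the dual cost, this yields the clean relation
\[
\langle \frac{l}{1+|\ubar|^p} - \mathcal{A}'v,\mu\rangle = p(\mu) - d(v),
\]
valid for every primal-admissible $\mu$ and every $v$. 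By construction of the conic program \eqref{eq:primal} and the growth assumptions \eqref{ass2} and \eqref{ass6}, both $\frac{l}{1+|\ubar|^p}$ and $\mathcal{A}'v$ are bounded on the compact set $T\times Y\times\g\R^m$, so against the finite measure $\mu$ both sides are finite and well defined. Note also that when $v$ is dual-admissible the integrand is nonnegative, which with $\mu\geq0$ recovers weak duality $d(v)\leq p(\mu)$ and hence $d^*\leq p_W^*$.

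With this identity the equivalence is immediate. For the ``if'' direction, suppose $\mu$ solves \eqref{eq:weakProblem}, so $p(\mu)=p_W^*$; by Theorem~\ref{th:noGap} there is no gap, $p_W^*=d^*$, and since $d^*$ is a supremum over dual-admissible $v$ there is a sequence $(v^k)$ admissible for \eqref{eq:dualExplicit} with $d(v^k)\to d^*=p(\mu)$, whence $\langle \frac{l}{1+|\ubar|^p}-\mathcal{A}'v^k,\mu\rangle = p(\mu)-d(v^k)\to0$. For the ``only if'' direction, suppose such a sequence exists with the integral tending to $0$; the identity rewrites this as $d(v^k)\to p(\mu)$, while weak duality and admissibility give $d(v^k)\leq d^*\leq p_W^*\leq p(\mu)$, so passing to the limit in $p(\mu)=\lim_k d(v^k)\leq d^*\leq p_W^*\leq p(\mu)$ collapses every inequality to equality and forces $p(\mu)=p_W^*$, i.e. $\mu$ is optimal.

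I do not anticipate a genuine obstacle here: the heart of the matter is the one-line identity, after which the result is the textbook fact that, under zero duality gap, a dual-feasible sequence drives the slackness pairing $\langle c-\mathcal{A}'v,\mu\rangle$ to zero exactly when the primal point is optimal. The only points requiring care are formal: checking that $\mathcal{A}'v$ admits the claimed bounded (continuous) extension to the compactification $\g\R^m$ so that the pairing is an honest finite integral, and noting that invoking Theorem~\ref{th:noGap} in the ``if'' direction already presupposes existence of a dual-admissible vector — which is precisely what guarantees the supremum $d^*$ can be approached by admissible $v^k$.
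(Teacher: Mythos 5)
Your proposal is correct and follows essentially the same route as the paper, which proves the corollary in one line by invoking weak duality via the complementarity condition $\lim_{k \rightarrow \infty} \langle x^*_p, c-\mathcal{A}' x^k_d \rangle_p = 0$ for a primal-optimal $x^*_p$ and a dual-minimizing sequence $x^k_d$, together with the no-gap Theorem~\ref{th:noGap}; your identity $\langle \frac{l}{1+|\ubar|^p}-\mathcal{A}'v,\mu\rangle = p(\mu)-d(v)$ is exactly the content of that complementarity argument, spelled out. Your only departure is cosmetic: you do not need the paper's citation of Lemma~\ref{th:solAttained} (the admissible $\mu$ in the statement already plays the role of $x^*_p$), and the dual-feasibility caveat you flag is harmless here since $\delta>0$ in Assumption~\ref{th:assGrowth} makes $l\geq 0$, so constant functions $v$ are admissible for \eqref{eq:dualExplicit}.
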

\begin{proof}
This corollary exploits weak duality via the complementarity condition
\[
\lim_{k \rightarrow \infty} \langle x^*_p, c-\mathcal{A}' x^k_d \rangle_p = 0
\]
if $x^*_p$ is optimal for the primal and $x^k_d$ is minimizing for the dual, and exploits Lem.~\ref{th:solAttained} guaranteeing the existence of an optimal $x^*_p$.
\end{proof}

This last corollary implies easy sufficient conditions for global optimality of the original and strong problem:
\begin{corollary}
Let $u(t)$ be admissible for \eqref{eq:OCP}, resp. $(\sigma, \nu)$ be admissible for \eqref{eq:strongProblem}. Let $\mu$ be their corresponding occupation measure. If there exists a sequence $(v^k)_{k \in \N} \in \ContFunc^1(T \times Y )$ satisfying the conditions of Cor.~\ref{th:CNSglobal}, then $u(t)$ is globally optimal for \eqref{eq:OCP}, resp. $(\sigma, \nu)$ is globally optimal for \eqref{eq:strongProblem}.
\end{corollary}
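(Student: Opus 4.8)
The plan is to push the question down to the weak program \eqref{eq:weakProblem}, certify optimality there by Corollary~\ref{th:CNSglobal}, and then lift that optimality back up the chain of relaxations by a squeezing argument. The bridge is the occupation measure $\mu$ of Definition~\ref{th:defOccpMeas}, so I would first record its two defining features. By Proposition~\ref{th:weakConstraints}, $\mu$ satisfies the dynamic constraint of \eqref{eq:weakProblem}, and being by construction a non-negative measure on $T\times Y\times\gamma\R^m$, it is \emph{admissible} for \eqref{eq:weakProblem}. Moreover, the construction of $\mu$ yields a cost identity: in the genuine-control case the embedding \eqref{eq:naturalImbedding} produces a relaxed arc with no jumps, so $\nu=\delta_{u(t)}$, $\sigma=(1+|\ubar|^p)\,\d t$ and $\xi=\delta_{y(t)}$, whence $\langle \frac{l(t,\ybar,\ubar)}{1+|\ubar|^p},\mu\rangle=\int_T l(t,y(t),u(t))\,\d t$; in the relaxed case the same bookkeeping gives $\langle \frac{l(t,\ybar,\ubar)}{1+|\ubar|^p},\mu\rangle=\int_T\int_{\gamma\R^m}\frac{l(t,y(t),\ubar)}{1+|\ubar|^p}\,\nu(\d\ubar|t)\,\sigma(\d t)$. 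Thus the weak cost of $\mu$ equals the cost of $u$ in \eqref{eq:OCP}, respectively the cost of $(\sigma,\nu)$ in \eqref{eq:strongProblem}.

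Next I would feed the hypothesis into Corollary~\ref{th:CNSglobal}. The assumed sequence $(v^k)_{k\in\N}\subset\ContFunc^1(T\times Y)$ is admissible for the dual \eqref{eq:dualExplicit} and satisfies $\lim_{k\to\infty}\langle \frac{l(t,\ybar,\ubar)}{1+|\ubar|^p}-\mathcal{A}v^k,\mu\rangle=0$. Since $\mu$ was just shown to be admissible for \eqref{eq:weakProblem}, Corollary~\ref{th:CNSglobal} applies verbatim and certifies that $\mu$ is a \emph{solution} of \eqref{eq:weakProblem}, i.e. its weak cost equals $p_W^*$. Combined with the cost identity of the previous step, this shows that the cost of $u$ in \eqref{eq:OCP} (resp. of $(\sigma,\nu)$ in \eqref{eq:strongProblem}) equals $p_W^*$.

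Finally I would close by the ordering of the three problems; denote by $p_O^*$ and $p_S^*$ the infima of \eqref{eq:OCP} and of \eqref{eq:strongProblem}. One has $p_W^*\le p_S^*$ (noted just after \eqref{eq:weakProblem}) and $p_S^*\le p_O^*$, the latter because the embedding \eqref{eq:naturalImbedding} sends every admissible control of \eqref{eq:OCP} to an admissible relaxed control of equal cost, so that \eqref{eq:strongProblem} is a genuine relaxation. In the genuine-control case, admissibility of $u$ gives $p_O^*\le\int_T l(t,y(t),u(t))\,\d t$, and the previous step identifies the right-hand side with $p_W^*$; hence $p_O^*\le p_W^*\le p_S^*\le p_O^*$, forcing equalities throughout, so $u$ attains $p_O^*$ and is globally optimal for \eqref{eq:OCP}. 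The relaxed case is identical with $p_S^*$ playing the role of $p_O^*$: admissibility of $(\sigma,\nu)$ gives $p_S^*\le p_W^*\le p_S^*$, whence global optimality of $(\sigma,\nu)$ for \eqref{eq:strongProblem}.

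The step that needs genuine care, and that I expect to be the main obstacle, is the cost identity when $\sigma$ carries atoms: during a jump the weak cost integrates $\frac{l}{1+|\ubar|^p}$ along the fictitious graph-completion trajectory $y_s$ of \eqref{eq:limitingSystem} through the averaging kernel $\xi$, so one must make sure the cost of \eqref{eq:strongProblem} is read in the same graph-completed sense, exactly as the dynamic term $a_3$ is handled in the proof of Proposition~\ref{th:weakConstraints}. Once this bookkeeping is granted, the remainder is purely order-theoretic and requires no further estimates.
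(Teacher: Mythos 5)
Your proposal is correct and is essentially the paper's own (implicit) argument: the paper states this corollary without a separate proof, presenting it as an immediate consequence of Cor.~\ref{th:CNSglobal}, and your write-up fills in exactly the intended steps — admissibility of $\mu$ for \eqref{eq:weakProblem} via Proposition~\ref{th:weakConstraints}, the cost identity $\int_T l(t,y(t),u(t))\,\d t = \langle \frac{l}{1+\lvert \ubar\rvert^p},\mu\rangle$ (resp.\ its relaxed analogue), optimality of $\mu$ from Cor.~\ref{th:CNSglobal}, and the squeeze $p_O^*\leq \langle \frac{l}{1+\lvert \ubar\rvert^p},\mu\rangle = p_W^*\leq p_S^*\leq p_O^*$. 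Your caveat that the strong cost must be read in the graph-completed sense at atoms of $\sigma$ is the right one and is consistent with the paper's conventions in Definitions~\ref{th:defReparam} and~\ref{th:defOccpMeas} (and is vacuous in the genuine-control case, where $\sigma$ is absolutely continuous).
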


\section{Semidefinite relaxations}
\label{sec:momentRelax}

This section outlines the numerical method to solve \eqref{eq:weakProblem} in practice, by means of primal-dual
moment-SOS relaxations.
First, the problem is transformed to avoid explicitly handling $\gamma \R^m$.
Then, when problem data is restricted to be polynomial\footnote{This class of functions can cover a lot of cases in practice, see for instance the example in Section~\ref{sec:exampleSmeared}.}, occupation measures can be equivalently manipulated by their moments for the problem at hand. Then, this new infinite-dimensional problem is truncated so as to obtain a convex, finite-dimensional relaxation (see Section~\ref{solvelp}). Finally, we present a simple approach to reconstruct approximate trajectories from moment data, if more than the globally optimal cost is needed.

In the remainder of the paper, we make the following standing assumptions:
\begin{assumption}
\label{th:polynomialDataAssumption}
All functions are polynomial in their arguments:
\begin{equation}
   l, f \in \R[t,\ybar,\ubar].
\end{equation}
In addition,
set $Y$ is basic semi-algebraic, that is, defined by  finitely many polynomial inequalities
\begin{gather}
\label{eq:setKexplicit}
 Y = \left\lbrace \ybar \in \R^n : \, g_i(\ybar) \geq 0, i = 1,\ldots,n_Y   \right\rbrace
\end{gather}
with $g_i \in \R[\ybar]$, $ i = 1,\ldots,n_Y$. 
In addition, it is assumed that one of the $g_i$ enforces a ball constraint on all variables, which is possible w.l.g. since
$Y$ is assumed compact (see \cite[Th.~2.15]{lasserre} for slightly weaker conditions).
\end{assumption}

\subsection{Equivalent problem on compact sets}
\label{sec:compactsets}

Set $\gamma \R^m$ prevents the definition of moments of the form $\langle \ubar^\alpha, \mu \rangle$, since not all such polynomials are members of $\mathcal{R}$. In addition, manipulating very large quantities is bound to pose numerical issues. To overcome this difficulty, we use the fact that $\gamma \R^m$ is homeomorphic to the unit ball or a simplex in $\R^m$, see \cite{Kruzik1998optimization}. We leverage this necessary transformation to also remove the rational dependence in the control in \eqref{eq:weakProblem}.

For precisely this last practical reason, we consider specifically the $p$-norm when expressing $ \lvert \ubar \rvert $ as well as in the definition of $B^m$, the unit ball in $\R^m$. Define the mapping
\begin{equation}\label{eq:u2w}
\begin{array}{rclcl}
q & : & \gamma\R^m & \to & B^m \\
&& \ubar & \mapsto & \displaystyle \frac{\ubar}{(1+ \lvert \ubar \rvert^p)^{\frac{1}{p}}}
\end{array}
\end{equation}
with inverse
\begin{equation}\label{eq:w2u}
\begin{array}{rclcl}
q^{-1} & : &B^m  & \to & \gamma\R^m\\
&& \wbar & \mapsto &  \displaystyle \frac{\wbar}{(1- \lvert \wbar \rvert^p)^{\frac{1}{p}}}.
\end{array}
\end{equation}
As both $q$ and $q^{-1}$ are continuous, $q$ defines a proper homeomorphism.
Define now the pushforward measure
\begin{equation}
\label{eq:pushForward}
 \gamma(A \times B \times C) := \mu(A \times B \times q^{-1}(C))
\end{equation}
for every Borel subsets $A$, $B$, $C$ respectively in $T$, $Y$, $B^m$.

Finally, introduce the additional lifting variable
\[
\wbar_{0} := \left( 1- \sum_{i=1}^m \lvert\wbar_i \rvert^p \right)^{1/p}.
\]
This variable is easily constrained algebraically if, for instance, $p$ is even, since constraints $\wbar_{0} \geq 0$ and $\wbar_{0}^p = 1 - \sum_{i=1}^m \wbar_i^p$ uniquely determine $\wbar_{0}$. For clarity of exposition, we assume for the rest of the section that $p$ is even, while common alternative cases are simply worked out in some examples in Section \ref{sec:examples}.

With these hypotheses in mind, notice that for multi-index $\alpha \in \N^m$ such that $\lvert \alpha \rvert \leq p$,
\begin{equation}
\langle \frac{\ubar^\alpha}{1 + \lvert \ubar \rvert_p^p } , \mu \rangle = \langle \wbar_0^{p - \lvert \alpha \rvert} \wbar^\alpha, \gamma \rangle.
\end{equation}
That is, by homogenizing (with respect to the control) polynomials via the new variable $\wbar_0$, and using push-forward measure \eqref{eq:pushForward}, problem \eqref{eq:weakProblem} may be reformulated as:
\begin{equation}
\label{eq:weakProblemForward}
\begin{aligned}
p_W^* = \inf_{\gamma} \;\; & \langle \hat{l}(t,\ybar,\wbar), \gamma \rangle  \\
\text{s.t.} \;\; & \langle \frac{\partial v}{\partial t} \wbar_0^p  + \frac{\partial v}{\partial \ybar} \hat{f}, \gamma \rangle 
= v(t_f,y(t_f)) - v(t_0,y(t_0)) \quad \forall v \in \ContFunc^1(T \times Y)\\
& \gamma \in \PositiveMeasures(G)
\end{aligned}
\end{equation}
after defining
\[
G := T \times Y \times W, \quad W:= \left\lbrace (\wbar_0, \wbar) \in [0,1] \times B^m: \; \wbar_0 \geq 0, \wbar_{0}^p = 1 - \sum_{i=1}^m \wbar_i^p \right\rbrace
\]
and where $\hat{l}$ and $\hat{f}$ are the homogenizations by $\wbar_0$ with respect to the control of each term of resp. $l$ and $f$. That is, we multiply each of these terms by an appropriate power of $\wbar_0$ so that their degree with respect to the extended controls $(\wbar_0, \wbar)$ is equal to $p$.

Notice that \eqref{eq:weakProblemForward} is a measure LP with purely polynomial data. Indeed, it is easy to see that the support $G$ of $\gamma$ can be described as a semi-algebraic set:
\begin{equation}\label{eq:g}
 G = \left\lbrace  (t,\ybar,\wbar) \in \R^{1+n+1+m} : \; g_i(t,\ybar,\wbar) \geq 0, \; i = 1,\ldots , n_G  \right\rbrace.
\end{equation}
For ease of exposition (in the definition of moment matrices below), we also enrich the algebraic description of $G$ with the trivial inequality $g_0(t,y,w) := 1 \geq 0$. 
In addition, again for ease of exposition, we formally consider equality constraints as two inequality constraints.
Finally, without loss of generality since $G$ is bounded, we make the following
\begin{assumption}\label{compact}
One of the polynomials in the definition of set $G$ in (\ref{eq:g}) has the form $g_i(t,y,w):=r-\|(t,y,w)\|^2_2$ for
a sufficiently large constant $r$.
\end{assumption}

\subsection{Equivalent moment problem}
\label{sec:momentLP}

This subsection details how measure LP \eqref{eq:weakProblemForward} can be solved by an appropriate hierarchy of semidefinite programming (SDP)
relaxations, since it possesses polynomial data.

We first rewrite \eqref{eq:weakProblemForward} in terms of moments. Recall that the moment of degree $\alpha \in N^n$
of a measure $\mu(\diff x)$ on $X \subset \R^n$ is the real number\footnote{Most references on the subject use $y$ instead of $z$ for moments,
but we reserve this symbol for trajectories in this paper.}
\begin{equation}\label{moments}
z_\alpha = \langle x^\alpha , \mu \rangle.
\end{equation}
Then, given a sequence $z=(z_\alpha)_{\alpha \in \N^n}$, let $\ell_z:\R[x]\to\R$ be the linear functional
\begin{equation}
 f(x)= \sum_\alpha f_\alpha x^\alpha \in\R[x]
  \quad \mapsto \quad
  \ell_z(f)\,=\,\sum_\alpha f_\alpha z_\alpha. 
\end{equation}
Define the localizing matrix of order $d$ associated with sequence $z$ and
polynomial $g(x) = \sum_\gamma g_\gamma x^\gamma \in \R[x]$ as the real symmetric matrix $M_d(g\,y)$ whose entry $(\alpha,\beta)$ reads
\begin{align}
  \label{eq:locMomMat}
 [M_d(g\,z)]_{\alpha, \beta} & = \ell_z \left( g(x) \, x^{\alpha+\beta} \right) \\
                    & = \sum_\gamma g_\gamma \, z_{\alpha+\beta+\gamma},
  \quad \forall \alpha,\beta \in \N^n_d.
\end{align}
In the particular case that $g(x)=1$, the localizing matrix is called the moment matrix.
As a last definition, a sequence of reals $z=(z_{\alpha})_{\alpha \in \N^n}$ is said to have a representing measure if there exists a finite Borel measure $\mu$ on $X$, such that relation (\ref{moments}) holds for every $\alpha \in \N^n$.

The construction of the moment problem associated with \eqref{eq:weakProblemForward} can now be stated. Its decision variable is the sequence of moments $(z_{\beta})_{\beta \in \N^{1+n+1+m}}$ of measure $\gamma$, where each moment is given by $z_\beta := \langle e_\beta , \gamma \rangle, \beta \in \N^{1+n+1+m}$, where $e_\beta$ are polynomials of $\R[t,\ybar,\wbar]$ defining its monomial basis\footnote{This choice purely for ease of exposition, for numerical
reasons other bases may be more appropriate, e.g. Chebyshev polynomials.}:
\begin{equation}
\label{eq:momentBaseSpecified}
e_\beta := t^{\beta_1} \ybar_1^{\beta_2} \cdots \ybar_{n}^{\beta_{n+1}} \wbar_{0}^{\beta_{n+2}}
\wbar_{1}^{\beta_{n+3}} \cdots \wbar_{m}^{\beta_{1+n+1+m}},
\:\:\beta \in \N^{1+n+1+m}.
\end{equation}
As each term of the cost of \eqref{eq:weakProblemForward} is polynomial by assumption, they can be rewritten as
\begin{equation}
\sum_{\beta \in \N^{n+1+m+1}} c_{\beta} z_{\beta} = \ell_{z}(\hat{l}).
\end{equation}
That is, vector $(c_{\beta})$ contains the coefficients of polynomial $\hat{l}$ expressed in monomial basis \eqref{eq:momentBaseSpecified}.

Similarly, the weak dynamic constraints of \eqref{eq:weakProblemForward} need only be satisfied for countably many polynomial test functions $v \in \R[t,\ybar]$, since the measure $\gamma$ is supported on a compact subset of $\R^{1+n+1+m}$.
Therefore, for the particular choice of test function 
\begin{equation}
\label{eq:betaTestFun}
v_\alpha(t,\ybar) := t^{\alpha_1} \ybar_1^{\alpha_2} \cdots \ybar_n^{\alpha_{n+1}}, \:\:\alpha \in \N^{1+n},
\end{equation}
the weak dynamics define linear constraints between moments of the form
\begin{equation}
\label{eq:bbeta}
\sum_{\beta \in \N^{1+n+1+m}} a_{\alpha,\beta} z_{\beta}  = b_\alpha := v_\alpha(t_t,y(t_f)) - v_\alpha(t_0,y(t_0))
\end{equation}
where coefficients $a_{\alpha,\beta}$ can  be deduced by identification with
\begin{equation}
\label{eq:momLinConstr}
\sum_{\beta \in \N^{1+n+1+m}} a_{\alpha,\beta} z_{\beta} = \ell_{z}(\frac{\partial v_\alpha}{\partial t} \wbar_0^p  + \frac{\partial v_\alpha}{\partial \ybar} \hat{f}(t,\ybar,\wbar)), \:\:\alpha \in \N^{1+n}.
\end{equation}

Finally, the only nonlinear constraints are the convex semidefinite constraints for measure representativeness.
Indeed, it follows from Putinar's theorem \cite[Theorem 3.8]{lasserre} that 
the sequence of moments $z$ has a representing measure defined on
a set $G$ satisfying Assumption \ref{compact} if and only if $M_d(g_i\,z) \succeq 0$ for all $d \in \N$ and for all polynomials $g_i$ defining the set, where $\succeq 0$ means positive semidefinite.

This leads to the problem:
\begin{equation}
\label{eq:momentpb}
\begin{array}{rcll}
p^*_\infty & = & \displaystyle \inf_z &  \displaystyle \sum_{\beta \in \N^{1+n+1+m}} c_{\beta} z_{\beta}\\
&& \mathrm{s.t.} &  \displaystyle \sum_{\beta \in \N^{1+n+1+m}} a_{\alpha,\beta} z_{\beta}  = b_\alpha, \:\alpha \in \N^{1+n},\\
% &&& \displaystyle \sum_{\beta \in \N^{1+n+1+m}} \tilde{a}_{\beta} z_{\beta} \leq N, \\
&&& M_d(g_i\, z) \succeq 0, \: i=0,1 ,\ldots , n_G, \: d \in \N.
\end{array}
\end{equation}

\begin{theorem}\label{th1}
Measure LP \eqref{eq:weakProblemForward} and infinite-dimensional SDP \eqref{eq:momentpb}
share the same optimum:
\begin{equation}
    p_W^* = p^*_\infty.
\end{equation}

\end{theorem}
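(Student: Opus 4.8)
The plan is to show the two values coincide by establishing inequalities in both directions, exploiting the fact that problem \eqref{eq:momentpb} is essentially \eqref{eq:weakProblemForward} with the measure variable $\gamma$ replaced by its moment sequence $z$, and with the representativeness of $z$ enforced exactly by the semidefinite conditions. The key structural observation is that every object appearing in \eqref{eq:weakProblemForward} --- the cost $\langle\hat l,\gamma\rangle$, the dynamic constraints tested against $v\in\ContFunc^1(T\times Y)$, and the positivity $\gamma\in\PositiveMeasures(G)$ --- has been faithfully transcribed into linear-in-$z$ data, so the only gap that could arise is between ``$z$ is a moment sequence of some positive measure on $G$'' and ``$z$ satisfies the infinite family of localizing-matrix conditions.''

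First I would prove $p^*_\infty \le p_W^*$. Take any $\gamma$ admissible for \eqref{eq:weakProblemForward} and form its moment sequence $z_\beta := \langle e_\beta,\gamma\rangle$. Because $\gamma\in\PositiveMeasures(G)$, every localizing matrix $M_d(g_i\,z)$ is automatically positive semidefinite (this is the trivial, elementary direction: for any polynomial $q$ with coefficient vector indexed by $\alpha$, the quadratic form equals $\int g_i\,q^2\,\d\gamma\ge0$ since $g_i\ge0$ on $G$). The linear moment constraints \eqref{eq:bbeta} are just the original dynamic constraints of \eqref{eq:weakProblemForward} specialized to the monomial test functions $v_\alpha$, hence they hold; and the objective agrees by construction since $\ell_z(\hat l)=\langle\hat l,\gamma\rangle$. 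Thus $z$ is feasible for \eqref{eq:momentpb} with the same cost, giving $p^*_\infty\le p_W^*$.

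The reverse inequality $p_W^* \le p^*_\infty$ is where the real content lies, and it rests on Putinar's theorem, invoked via Assumption~\ref{compact}. Given any $z$ feasible for \eqref{eq:momentpb}, the conditions $M_d(g_i\,z)\succeq0$ for all $d$ and all $i$, together with the ball constraint guaranteed by Assumption~\ref{compact} (which supplies the Archimedean/Putinar condition), imply that $z$ admits a representing measure $\gamma\in\PositiveMeasures(G)$, i.e.\ $z_\beta=\langle e_\beta,\gamma\rangle$ for all $\beta$. One must then check that this recovered $\gamma$ is admissible for \eqref{eq:weakProblemForward}: the moment equalities \eqref{eq:bbeta} hold for the countable monomial basis $v_\alpha$, and because these monomials are dense (in the $\ContFunc^1$ sense needed to approximate an arbitrary $v\in\ContFunc^1(T\times Y)$ together with its first derivatives on the compact set $T\times Y$), the dynamic constraint extends by continuity to all admissible test functions $v$. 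The cost matches again since $\ell_z(\hat l)=\langle\hat l,\gamma\rangle$, so $\gamma$ achieves cost $p^*_\infty$, whence $p_W^*\le p^*_\infty$.

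The main obstacle is the clean application of Putinar's theorem in the reverse direction, specifically ensuring its hypotheses are met and that the representing measure is genuinely supported on $G$. This is exactly what Assumption~\ref{compact} is engineered to secure: the explicit ball constraint $r-\|(t,y,w)\|_2^2\ge0$ makes the quadratic module Archimedean, so Putinar applies and the measure lives on the semialgebraic set $G$. A secondary subtlety worth a sentence is the passage from the countable monomial test-function constraints back to the full family indexed by $\ContFunc^1(T\times Y)$; since all measures involved are supported on the compact set $G$, polynomials are dense and this causes no loss, but it should be stated explicitly rather than assumed. Combining both inequalities yields $p_W^*=p^*_\infty$.
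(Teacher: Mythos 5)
Your proof is correct and follows essentially the same route as the paper, which states Theorem~\ref{th1} without a displayed proof but justifies it in the surrounding text by exactly your two ingredients: Putinar's theorem (made applicable by the ball constraint of Assumption~\ref{compact}) to pass from localizing-matrix positivity back to a representing measure on $G$, and the observation that, $\gamma$ being supported on a compact set, the dynamic constraints need only be tested on the countable monomial basis. Your explicit flagging of the $\ContFunc^1$-density step (approximating $v$ together with its first derivatives on the compact set $T\times Y$) is a worthwhile clarification of a point the paper leaves implicit, but it is not a departure from the paper's argument.
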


For the rest of the paper, we will therefore use $p_W^*$ to denote the cost of measure LP \eqref{eq:weakProblemForward} or
infinite-dimensional SDP \eqref{eq:momentpb} indifferently.

\subsection{Moment hierarchy}
\label{solvelp}

The final step to reach a tractable problem is relatively obvious: infinite-dimensional SDP problem \eqref{eq:momentpb} is truncated to its first few moments.

To streamline exposition, first notice in \eqref{eq:momentpb} that $M_{d+1}( \cdot) \succeq 0$ implies $M_{d}( \cdot) \succeq 0$, such that when truncated, only the semidefinite constraints of highest order must be taken into account.
Now, let $d_0 \in \N$ be the smallest integer such that all criterion, dynamics and constraint monomials belong to $\N^{1+n+1+m}_{2 d_0}$.  This is the degree of the so-called first relaxation. For any relaxation order $d \geq d_0$, the decision variable is now the finite-dimensional vector $( z_{\alpha})_\alpha$ with $\alpha \in \N_{2 d}^{1+n+1+m}$, made of the first $ \left( \begin{smallmatrix} 1+n+1+m+2d \\ 1+n+1+m \end{smallmatrix} \right)$ moments of  measure $\gamma$.  Then, define the index set
\[
  \bar{\N}^{1+n}_{2d} := \left\lbrace \alpha \in \N^{1+n}: \, \deg \left( \frac{\partial v_\alpha}{\partial t} \wbar_0^p  + \frac{\partial v_\alpha}{\partial \ybar} \hat{f}(t,\ybar,\wbar) \right) \leq 2 d, \right\rbrace
\]
viz. the set of monomials for which test functions of the form \eqref{eq:betaTestFun} lead to linear constraints of appropriate degree. By assumption, this set is finite and not empty -- the constant monomial always being a member.

Then, the SDP relaxation of order $d$ is given by
\begin{equation}\label{eq:momentrelax}
\begin{array}{rcll}
p^*_d & = &  \inf\limits_z &  \displaystyle  \sum_{\beta \in \N^{1+n+1+m}_{2d}} c_{\beta} z_{\beta} \\
&& \text{s.t.} &  \displaystyle \sum_{\beta \in \N^{1+n+1+m}_{2d}} a_{\alpha,\beta} z_{\beta}  = b_\alpha, \:\alpha \in \bar{\N}^{1+n}_{2d}, \\
% &&& \displaystyle \sum_{\beta \in \N_{2d}^{1+n+1+m}} \tilde{a}_{\beta} z_{\beta} \leq N, \\
&&& M_d(g_i\, z) \succeq 0, \: i=0,1 ,\ldots , n_G.
\end{array}
\end{equation}
Notice that for each relaxation, we obtain a standard  finite-dimensional SDP
that can be solved numerically by off-the-shelf software. In addition, the relaxations
converge asymptotically to the cost of the moment LP:

\begin{theorem}\label{th2}
\begin{equation}
p^*_{d_0} \leq p^*_{d_0+1} \leq \cdots \leq p^*_{\infty}  = p_W^*.
\end{equation}
\end{theorem}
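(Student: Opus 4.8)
The plan is to split the statement into three assertions: the monotonicity of the finite relaxations, the fact that they are all bounded above by $p^*_\infty$, and the convergence of their limit to $p^*_\infty$; the final equality $p^*_\infty = p_W^*$ is precisely Theorem \ref{th1} and may be quoted directly.

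First I would establish monotonicity and the upper bound, which are immediate from the nested structure of the constraints. For $d' > d$, any moment vector feasible for relaxation \eqref{eq:momentrelax} at order $d'$, once restricted to its entries of degree at most $2d$, is feasible at order $d$: the higher-order localizing matrices $M_{d'}(g_i z) \succeq 0$ contain $M_d(g_i z) \succeq 0$ as principal submatrices (recall the observation that $M_{d+1}(\cdot) \succeq 0$ implies $M_d(\cdot) \succeq 0$), and $\bar{\N}^{1+n}_{2d} \subset \bar{\N}^{1+n}_{2d'}$ so the linear constraints at order $d$ form a subset of those at order $d'$. Minimizing the same cost over a larger feasible set yields a smaller value, whence $p^*_d \leq p^*_{d'}$. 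The same comparison, now between \eqref{eq:momentrelax} and the infinite-dimensional SDP \eqref{eq:momentpb}, shows $p^*_d \leq p^*_\infty$ for every $d \geq d_0$, so the non-decreasing sequence $(p^*_d)$ is bounded above by $p^*_\infty$ and its limit $p^* := \lim_d p^*_d = \sup_d p^*_d$ exists and satisfies $p^* \leq p^*_\infty$.

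The substance of the theorem is the reverse inequality $p^* \geq p^*_\infty$, for which I would use the standard compactness argument for the moment hierarchy. For each $d$ pick an optimal (or $1/d$-optimal) solution $z^{(d)}$ of \eqref{eq:momentrelax}. By coercivity of the cost, through the same $\delta > 0$ mechanism invoked in Lemma \ref{th:solAttained}, these near-optimal vectors have uniformly bounded mass $z^{(d)}_0$. The ball constraint of Assumption \ref{compact}, via the localizing matrix $M_d((r - \|(t,\ybar,\wbar)\|_2^2)\,z) \succeq 0$ together with $M_d(z) \succeq 0$, then dominates every moment by a power of $r$ times the mass, so each coordinate sequence $(z^{(d)}_\beta)_d$ is bounded. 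A diagonal extraction (Tychonoff on a countable product of compact intervals) produces a subsequence converging entrywise to some sequence $z^*$.

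It then remains to check that $z^*$ is admissible for \eqref{eq:momentpb} with value $p^*$. Each linear constraint in \eqref{eq:momentpb} involves only finitely many moments and holds along the subsequence for all large $d$, hence passes to the limit; likewise, for every fixed order $d$ and index $i$ the matrix $M_d(g_i z^{(d')})$ is positive semidefinite for all $d' \geq d$ and converges entrywise to $M_d(g_i z^*)$, so $M_d(g_i z^*) \succeq 0$ for all $d$ by closedness of the PSD cone. Thus $z^*$ is feasible for \eqref{eq:momentpb}, and since the cost is a fixed finite linear combination of moments, $\sum_\beta c_\beta z^*_\beta = \lim_d \sum_\beta c_\beta z^{(d)}_\beta = p^*$, giving $p^*_\infty \leq p^*$ and closing the chain. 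The main obstacle is precisely the uniform boundedness of the moments: it is where Assumption \ref{compact} and the coercivity-based bound on the mass are indispensable, since without the ball constraint the diagonal extraction would fail.
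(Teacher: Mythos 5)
Your overall route is sound and is, in essence, the standard argument hidden behind the paper's two-line proof: the paper disposes of monotonicity ``by construction'' and quotes \cite[Theorem 3.8]{lasserre} for the asymptotic convergence $p^*_d \uparrow p^*_\infty = p_W^*$. Your nesting argument for $p^*_d \leq p^*_{d'} \leq p^*_\infty$ is exactly what the paper means by construction, and organizing the limit step so that only feasibility for the infinite-dimensional SDP \eqref{eq:momentpb} must be checked --- letting Theorem \ref{th1} carry all the representing-measure content --- is a legitimate and tidy division of labour that replaces the citation by a self-contained proof.

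There is, however, one genuine gap, located precisely at the step you yourself single out as the crux: the uniform bound on the masses $z^{(d)}_0$. The ``$\delta>0$ mechanism'' of Lemma \ref{th:solAttained} works for genuine measures because integrating a pointwise inequality against a nonnegative measure preserves it; but a feasible point $z^{(d)}$ of the truncated SDP \eqref{eq:momentrelax} is only a pseudo-moment vector, and the functional $\ell_{z^{(d)}}$ need not be nonnegative on polynomials that are merely nonnegative on $G$. Hence coercivity \eqref{ass7}, which after homogenization reads $\hat{l} \geq \delta\,(1-\wbar_0^p)$ on $G$, does not by itself yield $\ell_{z^{(d)}}(\hat{l}) \geq \delta\, \ell_{z^{(d)}}(1-\wbar_0^p)$, and without that inequality you have no control over $z^{(d)}_0 = (t_f - t_0) + \ell_{z^{(d)}}(1-\wbar_0^p)$, the first summand being pinned by the test function $v=t$. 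The standard repair: for $\epsilon \in (0,\delta)$, the polynomial $\hat{l} - \delta(1-\wbar_0^p) + \epsilon$ is strictly positive on $G$, hence by Putinar's Positivstellensatz (applicable thanks to the ball constraint of Assumption \ref{compact}) it admits a weighted-SOS representation $\sum_i g_i s_i$ of some fixed degree $2d_\epsilon$; for every $d \geq d_\epsilon$, feasibility of $z^{(d)}$ gives $\ell_{z^{(d)}}(g_i s_i) \geq 0$, whence $\ell_{z^{(d)}}(\hat{l}) + \epsilon z^{(d)}_0 \geq \delta\, \ell_{z^{(d)}}(1-\wbar_0^p)$, and combining with the pinned part bounds $z^{(d)}_0$ along the tail of the sequence in terms of the near-optimal costs, which are bounded above by $p^*_\infty + 1$ since $p^*_d \leq p^*_\infty$. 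That is all your diagonal extraction needs, and the same certificate shows $p^*_d > -\infty$ for large $d$, a fact you implicitly used when selecting $1/d$-optimal points. You should also record the standing assumption that \eqref{eq:momentpb} is feasible, i.e. $p^*_\infty < +\infty$, without which the near-optimal costs are not bounded above and the extraction argument degenerates. With this certificate inserted, your proof is a complete and correct substitute for the paper's appeal to \cite[Theorem 3.8]{lasserre}.
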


\begin{proof}
By construction, observe that $j > i$ implies $p^*_{d_0+j} \geq p^*_{d_0+i} $, viz.
the sequence $p^*_d$ is monotonically non-decreasing. Asymptotic convergence to $p_W^*$
follows from \cite[Theorem 3.8]{lasserre}.
\end{proof}

Therefore, by solving the truncated problem for ever greater relaxation orders, we will obtain a monotonically non-decreasing sequence
of lower bounds to the true optimal cost.

\subsection{SOS hierarchy}
\label{sec:SOS}

As for the measure LP of Section \ref{sec:measureLP}, the moment relaxations detailed in the previous section possess a conic dual. In this section, we show that this dual problem can be interpreted as a polynomial sum-of-squares (SOS) strengthening of the dual outlined in Section \ref{sec:duality}.
The exact form of the dual problem is an essential aspect of the numerical method, since it will be solved implicitly whenever primal-dual interior point methods are used for the moment hierarchy of Section \ref{solvelp}.

Let ${\mathbb S}^n$ be the space of symmetric $n \times n$ real matrices equipped with the
inner product $\langle A, B \rangle := \mathrm{trace} \, AB$ for $A,B \in {\mathbb S}^n$.
In problem \eqref{eq:momentrelax}, let us define the matrices $A_{i,\beta} \in {\mathbb S}^{\left( \begin{smallmatrix} 1+n+1+m+d \\ 1+n+1+m \end{smallmatrix} \right)}$ satisfying the identity
\begin{equation}
M_d(g_i\:z) = \sum_\beta A_{i,\beta} z_{\beta}
\end{equation}
for every sequence $(z_{\beta})_{\beta}$ and $i=0,1,\ldots,n_G$.

\begin{proposition}
The conic dual of moment SDP \eqref{eq:momentrelax} is given by the SOS SDP
\begin{equation}\label{eq:dualmomentrelax}
\begin{array}{ll}
\sup\limits_{x, X} &  \displaystyle \sum_{\alpha \in \bar{\N}^{1+n}_{2d}} b_{\alpha} x_{\alpha}  \\
\text{s.t.} &  \displaystyle \sum_{\alpha \in \bar{\N}^{1+n}_{2d}} a_{\alpha,\beta} x_\alpha  + \sum_{i=0}^{n_G} \langle A_{i,\beta}, X_{i} \rangle =  c_{\beta}, \: \beta \in \N^{1+n+1+m}_{2 d},\\
& X_i \in {\mathbb S}^{\left( \begin{smallmatrix} 1+n+1+m+d \\ 1+n+1+m \end{smallmatrix} \right)}, \quad X_{i} \succeq 0,  \quad i = 0, 1,\ldots , n_G, \\
& x_\alpha \in \R, \quad \alpha \in \bar{\N}^{1+n}_{2d}.
\end{array}
\end{equation}

\end{proposition}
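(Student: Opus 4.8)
The statement is a routine exercise in conic (Lagrangian) duality for semidefinite programs, entirely parallel to the derivation of the dual conic LP in Section~\ref{sec:duality}. The only structural facts needed are the self-duality of the positive semidefinite cone under the trace inner product and the linearity of the localizing matrices in the moment sequence. The plan is to form the Lagrangian of primal SDP \eqref{eq:momentrelax}, minimize over the free moment vector $z$, and read off the resulting constraints on the dual multipliers.

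First I would assign dual variables. To the linear equality constraints indexed by $\alpha \in \bar{\N}^{1+n}_{2d}$ I associate free multipliers $x_\alpha \in \R$, and to each semidefinite constraint $M_d(g_i\,z) \succeq 0$ I associate a symmetric multiplier $X_i$ of the same order as $M_d(g_i\,z)$, subject to $X_i \succeq 0$, the sign restriction coming from the self-duality of the PSD cone under $\langle A, B \rangle = \mathrm{trace}\,AB$. The Lagrangian then reads
\[
L(z,x,X) = \sum_{\beta} c_\beta z_\beta - \sum_{\alpha} x_\alpha \Big( \sum_\beta a_{\alpha,\beta} z_\beta - b_\alpha \Big) - \sum_{i=0}^{n_G} \langle X_i, M_d(g_i\,z) \rangle .
\]
Using the defining identity $M_d(g_i\,z) = \sum_\beta A_{i,\beta} z_\beta$ to write $\langle X_i, M_d(g_i\,z)\rangle = \sum_\beta \langle A_{i,\beta}, X_i \rangle z_\beta$, I would collect the coefficient of each moment $z_\beta$:
\[
L(z,x,X) = \sum_\beta z_\beta \Big( c_\beta - \sum_\alpha a_{\alpha,\beta} x_\alpha - \sum_{i=0}^{n_G} \langle A_{i,\beta}, X_i \rangle \Big) + \sum_\alpha b_\alpha x_\alpha .
\]

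Since $z$ ranges over the whole finite-dimensional moment space with no sign restriction, the dual function $\inf_z L(z,x,X)$ equals $-\infty$ unless the coefficient of every $z_\beta$ vanishes, i.e. unless
\[
\sum_\alpha a_{\alpha,\beta} x_\alpha + \sum_{i=0}^{n_G} \langle A_{i,\beta}, X_i \rangle = c_\beta, \quad \beta \in \N^{1+n+1+m}_{2d};
\]
when these equalities hold, $L$ collapses to the constant $\sum_\alpha b_\alpha x_\alpha$. Maximizing the dual function over $(x,X)$ with each $X_i \succeq 0$ then gives exactly the SOS SDP \eqref{eq:dualmomentrelax}.

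The computation carries no genuine difficulty; the only points requiring care are the sign bookkeeping in the Lagrangian and the observation that it is the \emph{freeness} of $z$ (rather than a conic restriction on $z$ itself) that converts the stationarity condition into the equality constraints of the dual. As an alternative to the explicit Lagrangian, one could instead cast \eqref{eq:momentrelax} in the standard conic form used for \eqref{eq:primal} and invoke the generic conic dual formula \eqref{dual}, with the self-dual PSD cone playing the role of the cone $F_p^+$; both routes yield the same result, and the PSD multipliers $X_i$ are recognized a posteriori as the Gram matrices of the sum-of-squares certificate, which justifies the name.
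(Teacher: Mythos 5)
Your proposal is correct and follows essentially the same route as the paper: the paper's proof simply casts \eqref{eq:momentrelax} as an instance of the generic conic pair \eqref{eq:primal}/\eqref{dual} (after splitting equalities into two inequalities) and states that ``working out the details leads to the desired result.'' Your explicit Lagrangian computation, using $M_d(g_i\,z)=\sum_\beta A_{i,\beta}z_\beta$ and the self-duality of the PSD cone, is exactly those details made explicit, and you even note the paper's citation-style route as your alternative.
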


\begin{proof}
Replacing the equality constraints in \eqref{eq:momentrelax} as two inequalities, moment relaxation \eqref{eq:momentrelax} can be written as an instance of linear program \eqref{dual}, whose dual is given symbolically by \eqref{eq:primal}. Working out the details leads to the desired result, using for semidefinite constraints the duality bracket as explained earlier.
\end{proof}

The relationship between \eqref{eq:dualmomentrelax} and \eqref{eq:dualExplicit} might not be obvious at a first glance. Denote by $\Sigma[x]$ the subset of $\R[x]$ that can be expressed as a finite sum of squares of polynomials in the variable $x$. Then a standard interpretation (see e.g. \cite{lasserre}) of 
\eqref{eq:dualmomentrelax} in terms of such objects is given by the next proposition.

\begin{proposition} \label{th:polyStrength}
Semidefinite problem  \eqref{eq:dualmomentrelax} can be stated as the following polynomial SOS
strengthening of problem \eqref{eq:dualExplicit}:
\begin{equation}\label{eq:SOSstrength}
\begin{array}{ll}
\sup &  v(t_f,y(t_f)) - v(t_0,y(t_0)) \\
\text{s.t.} & \displaystyle \hat{l} - \frac{\partial v}{\partial t} \wbar_0^p  - \frac{\partial v}{\partial \ybar} \hat{f}  = \sum_{i=0}^{n_G} g_i s_{i}, \\
\end{array}
\end{equation}
where the maximization is w.r.t. the vector of coefficients $x$ of polynomial
\[
v(t,\ybar) = \sum_{\alpha \in \bar{\N}^{n+1}_{2d}} x_\alpha v_\alpha(t,\ybar),
\]
and the vectors of coefficients of polynomials
\[
s_{i} \in \Sigma [t,\ybar,\wbar], \quad \deg g_i \, s_{i} \leq 2d,  \quad i = 0, \ldots , n_G.
\]
\end{proposition}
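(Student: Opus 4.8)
The plan is to establish the equivalence between the matrix-form SDP \eqref{eq:dualmomentrelax} and the polynomial SOS form \eqref{eq:SOSstrength} by interpreting the two families of dual variables concretely. The variables $x_\alpha$, indexed by $\alpha \in \bar{\N}^{1+n}_{2d}$, are precisely the coefficients of the polynomial test function $v(t,\ybar) = \sum_\alpha x_\alpha v_\alpha(t,\ybar)$ in the monomial basis \eqref{eq:betaTestFun}; with this identification the objective $\sum_\alpha b_\alpha x_\alpha$ of \eqref{eq:dualmomentrelax} becomes $\sum_\alpha x_\alpha(v_\alpha(t_f,y(t_f)) - v_\alpha(t_0,y(t_0))) = v(t_f,y(t_f)) - v(t_0,y(t_0))$ by \eqref{eq:bbeta} and linearity, matching the objective of \eqref{eq:SOSstrength}. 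The positive semidefinite matrices $X_i \succeq 0$ must be identified with the Gram matrices of the SOS multipliers $s_i \in \Sigma[t,\ybar,\wbar]$: the key fact is that a polynomial is a sum of squares if and only if it admits a representation $s_i(\zeta) = m_d(\zeta)^\top X_i \, m_d(\zeta)$ with $X_i \succeq 0$, where $m_d$ is the vector of monomials of degree at most $d$ in the variables $\zeta = (t,\ybar,\wbar)$.

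Next I would translate the linear equality constraint of \eqref{eq:dualmomentrelax} into the polynomial identity of \eqref{eq:SOSstrength}. Using the defining identity $M_d(g_i\,z) = \sum_\beta A_{i,\beta} z_\beta$, one has $\langle A_{i,\beta}, X_i \rangle = [\text{coefficient of } e_\beta \text{ in } g_i s_i]$, so that $\sum_i \langle A_{i,\beta}, X_i \rangle$ collects the $e_\beta$-coefficient of $\sum_i g_i s_i$. Likewise, by \eqref{eq:momLinConstr}, $\sum_\alpha a_{\alpha,\beta} x_\alpha$ is the $e_\beta$-coefficient of $\frac{\partial v}{\partial t}\wbar_0^p + \frac{\partial v}{\partial \ybar}\hat f$, while $c_\beta$ is the $e_\beta$-coefficient of $\hat l$. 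Therefore the scalar equality constraint, holding for every $\beta \in \N^{1+n+1+m}_{2d}$, is exactly the statement that the two polynomials $\hat l - \frac{\partial v}{\partial t}\wbar_0^p - \frac{\partial v}{\partial \ybar}\hat f$ and $\sum_{i=0}^{n_G} g_i s_i$ have identical coefficients up to degree $2d$, i.e. the polynomial identity \eqref{eq:SOSstrength}. The degree bookkeeping $\deg g_i s_i \leq 2d$ is inherited from the truncation of the moment matrices $M_d(g_i\,z)$ to order $d$, which is what fixes the size of the Gram matrices $X_i$.

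The remaining step is to verify that this is genuinely the \emph{conic} dual as asserted by the preceding proposition, and here I would simply invoke that the matrix SDP \eqref{eq:dualmomentrelax} was already shown to be the conic dual of \eqref{eq:momentrelax} via the abstract pairing \eqref{eq:primal}--\eqref{dual}; the content of the present proposition is only the reinterpretation of that dual in polynomial language. The main obstacle, such as it is, is the careful coefficient-matching in the monomial basis: one must be attentive that the test-function monomials $v_\alpha$ depend only on $(t,\ybar)$ whereas the multiplier constraint lives in the full variables $(t,\ybar,\wbar)$, and that the homogenization by $\wbar_0$ built into $\hat l$ and $\hat f$ is respected so that every term on both sides of the identity has degree exactly $p$ in the extended controls. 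Once the basis is fixed consistently, the equivalence is a routine transposition of the bilinear pairings, and the SOS reformulation of the positivity constraint $\frac{l}{1+|\ubar|^p} - \mathcal{A}'v \geq 0$ of \eqref{eq:dualExplicit} into $\hat l - \frac{\partial v}{\partial t}\wbar_0^p - \frac{\partial v}{\partial \ybar}\hat f = \sum_i g_i s_i$ follows from Putinar-type positivity certificates on the compact semialgebraic set $G$, which is exactly the strengthening being claimed.
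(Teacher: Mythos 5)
Your proof is correct and follows essentially the same route as the paper's: matching the objectives via \eqref{eq:bbeta}, identifying the equality constraints coefficient-by-coefficient in the monomial basis $e_\beta$ (the paper phrases this as multiplying each scalar constraint by $e_\beta$ and summing up), and converting the semidefinite blocks $X_i$ into the SOS multipliers $s_i$ through the standard Gram-matrix correspondence, using the definition \eqref{eq:locMomMat} of the localizing matrices exactly as the paper does. One minor remark: your closing appeal to Putinar-type certificates is unnecessary and slightly misplaced --- the proposition only asserts that \eqref{eq:SOSstrength} is the explicit polynomial form of the dual SDP, and it is a \emph{strengthening} of \eqref{eq:dualExplicit} simply because an SOS representation trivially implies nonnegativity on $G$; Putinar's theorem is the converse direction and is invoked in the paper only for asymptotic convergence of the hierarchy, not for this equivalence.
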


\begin{proof}
By \eqref{eq:bbeta}, the cost of \eqref{eq:SOSstrength} is equivalent to \eqref{eq:dualmomentrelax}. Then multiply each scalar constraint (indexed by $\beta$) by monomial $e_\beta$ in definition \eqref{eq:momentBaseSpecified}, and sum them up. By definition, $\sum_\beta c_{\beta} e_\beta = \hat{l}$. Notice also that $\sum_{\beta} \sum_{\alpha} a_{\alpha,\beta} x_\alpha e_\beta = \frac{\partial v}{\partial t} \wbar_0^p  + \frac{\partial v}{\partial \ybar} \hat{f}$. The conversion from the semidefinite terms to the SOS exploits their well-known relationship (e.g. \cite[\S{}4.2]{lasserre}) to obtain the desired result, by definition \eqref{eq:locMomMat} of the localizing matrices.
\end{proof}

Prop. \ref{th:polyStrength} specifies in which sense ``polynomial SOS strengthenings'' must be understood: positivity constraints of problem\footnote{More exactly, the dual of \eqref{eq:weakProblemForward}, which is straightforward to explicit from \eqref{eq:dualExplicit} and map \eqref{eq:u2w}.} \eqref{eq:dualExplicit} are enforced by SOS certificates, and the decision variable of \eqref{eq:dualExplicit} is now limited to polynomials of appropriate degrees.
Finally, the following result states that no numerical troubles
are expected when using classical interior-point algorithms
on the primal-dual semidefinite pair (\ref{eq:momentrelax}-\ref{eq:dualmomentrelax}).

\begin{proposition}
The infimum in primal  problem \eqref{eq:momentrelax} is equal to
the supremum in dual problem \eqref{eq:dualmomentrelax},
i.e. there is no duality gap.
\end{proposition}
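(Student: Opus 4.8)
The plan is to treat \eqref{eq:momentrelax} and \eqref{eq:dualmomentrelax} as what they are: a finite-dimensional primal--dual pair of semidefinite programs, whose duality relationship has just been established in the preceding propositions. For such a pair, the standard conic duality theory already invoked in Section~\ref{sec:duality} (see \cite{Anderson,Barvinok2002convexity}) guarantees the absence of a duality gap as soon as one of the two programs satisfies a Slater-type constraint qualification, i.e. admits a strictly feasible point lying in the interior of the relevant cone, while having finite value. So the whole argument reduces to exhibiting such a point. The primal \eqref{eq:momentrelax} is feasible and bounded below (an admissible relaxed arc yields an occupation measure feasible for the moment problem, and the cost is bounded below by coercivity), so I would aim to produce a strictly feasible point for the dual \eqref{eq:dualmomentrelax}, equivalently for its SOS reformulation \eqref{eq:SOSstrength}.

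The key is to exploit the coercivity hypothesis \eqref{ass7}, namely $l \ge \delta|\ubar|^p$ with $\delta > 0$, which is in force throughout this section. Under the change of variables \eqref{eq:u2w} one has $|\ubar|^p/(1+|\ubar|^p) = 1 - \wbar_0^p$, so that on $G$ the homogenized Lagrangian satisfies $\hat l \ge \delta(1-\wbar_0^p)$. Choosing the affine test function $v(t,\ybar) := -C\,t$ gives $\frac{\partial v}{\partial t}\wbar_0^p + \frac{\partial v}{\partial \ybar}\hat f = -C\,\wbar_0^p$, so that the objective polynomial of \eqref{eq:SOSstrength} becomes
\[
\hat l - \frac{\partial v}{\partial t}\wbar_0^p - \frac{\partial v}{\partial \ybar}\hat f = \hat l + C\,\wbar_0^p \ge \delta + (C-\delta)\,\wbar_0^p \ge \delta > 0
\]
on all of $G$ as soon as $C \ge \delta$. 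Thus I can force the dual objective polynomial to be strictly positive on the compact set $G$.

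The last, and main, step is to turn this strict positivity into a strictly feasible point of the semidefinite program \eqref{eq:dualmomentrelax}, i.e. into a representation $\hat l - \frac{\partial v}{\partial t}\wbar_0^p - \frac{\partial v}{\partial \ybar}\hat f = \sum_{i=0}^{n_G} g_i s_i$ whose Gram matrices are positive definite. Since $G$ is Archimedean by Assumption~\ref{compact}, Putinar's Positivstellensatz \cite[Theorem 3.8]{lasserre} applies: a polynomial strictly positive on $G$ lies, for $d$ large enough, in the interior of the truncated quadratic module, which is precisely the statement that the associated Gram matrices can be taken positive definite. This supplies the required strict dual feasibility, and conic duality then yields that the value of \eqref{eq:dualmomentrelax} equals $p^*_d$ with no gap.

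The step I expect to be the genuine obstacle is this last one. Strict positivity on $G$ only guarantees membership in the quadratic module; the passage to its \emph{interior} (positive definite Gram matrices), together with the degree bookkeeping, is the delicate point, since Putinar's representation is only certified above some threshold order. One therefore obtains strict dual feasibility, and hence the absence of a duality gap, for all relaxation orders $d$ large enough, and arguing that the perturbation needed to open up the interior can be absorbed within degree $2d$ -- or otherwise dealing with the lowest orders -- is where the care is required.
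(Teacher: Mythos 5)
Your strategy---establishing strict (Slater) feasibility of the dual via coercivity and Putinar's Positivstellensatz---is genuinely different from the paper's, but it does not prove the proposition as stated. The proposition asserts zero duality gap between \eqref{eq:momentrelax} and \eqref{eq:dualmomentrelax} for \emph{every} relaxation order $d \geq d_0$, and this is what is needed to justify primal-dual interior-point solution of each relaxation in the hierarchy. Putinar's theorem \cite[Theorem 3.8]{lasserre} certifies that your strictly positive polynomial $\hat{l} + C\,\wbar_0^p$ lies in the quadratic module of $G$ only above some threshold degree that is not effective and depends on the polynomial; for a fixed $d$ below that threshold there may be no degree-$2d$ representation $\sum_{i} g_i s_i$ at all, let alone one with positive definite Gram matrices. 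Moreover, even above the threshold, passing from membership to \emph{strict} feasibility is circular at fixed $d$: the natural perturbation $\hat{l} + C\,\wbar_0^p - \epsilon \sum_i g_i \theta_i$ (with $\theta_i$ sums of all monomial squares of matching degree, so as to shift each Gram matrix by $\epsilon I$) is again merely strictly positive on $G$, and certifying \emph{it} within degree $2d$ requires another Putinar representation at a possibly higher order. So the step you yourself flag as delicate is a genuine gap, not bookkeeping: your argument yields absence of a duality gap only for all sufficiently large $d$, a strictly weaker conclusion.

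The paper avoids positivity certificates entirely and works on the primal side: by Assumption~\ref{compact}, the description of $G$ contains a ball constraint $g_i(t,\ybar,\wbar) = r - \|(t,\ybar,\wbar)\|_2^2$, and the corresponding localizing constraint $M_d(g_i\,z) \succeq 0$ forces the truncated moment vector $z$ to be bounded in \eqref{eq:momentrelax} at \emph{every} order $d$. It then invokes the argument of Theorem 4 in Appendix D of \cite{roa}, i.e.\ strong duality for a semidefinite program whose feasible set is bounded (and closed), which delivers $p^*_d = \sup$ of \eqref{eq:dualmomentrelax} for each $d \geq d_0$, with no threshold. If you wish to keep a self-contained argument, this primal-compactness route is the one to take; your coercivity computation ($\hat{l} \geq \delta(1-\wbar_0^p)$ on $G$, hence $\hat{l} + C\,\wbar_0^p \geq \delta$ for $C \geq \delta$) is correct and could still serve elsewhere, e.g.\ to bound the mass $z_0$, but it cannot substitute for the fixed-order duality statement.
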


\begin{proof}
By Assumption \ref{th:polynomialDataAssumption}, one of the polynomials $g_i$ in the description of set $Y$
enforces a ball constraint, and so does the ball constraint on $W$ and the representation of the intervals.
The corresponding localizing constraints $M_d(g_i\, z) \succeq 0$ then implies
that the vector of moments $z$ is bounded in semidefinite problem \eqref{eq:momentrelax}. 
Then to prove the absence of duality gap, we use
the same arguments as in the proof of Theorem 4 in Appendix D of \cite{roa}.
\end{proof}

\subsection{Approximate solution reconstruction}
\label{sec:reconstruction}

Solving the hierarchy of primal/dual problems \eqref{eq:momentrelax}/\eqref{eq:dualmomentrelax} simply yields a monotonically  non-decreasing and converging sequence of lower bounds on the relaxed cost $p_W^*$. If actual trajectories and/or controls must also be reconstructed, additional computations must be carried out, since this information is encoded within the moments and the dual polynomial certificates.

We follow the simple strategy proposed in \cite{claeys2014reconstruction}, which is well-suited to recover discontinuous trajectories. In this framework, a grid $G_\epsilon$ of $G$ is fixed, such that set $G_\epsilon$ has finite cardinality and $\mathrm{dist} (z, G_\epsilon) \leq \epsilon $ for all $z \in G$. Then, one seeks to minimize the distance between the truncated moment sequence  of a measure supported on $G_\epsilon$ , and the moments returned by any of the relaxations. The supremum norm is particularly interesting, since it corresponds to the truncation of the weak star norm on measures. Because $G_\epsilon$ has finite cardinality, this optimization problem is a simple finite-dimensional linear program to solve with the atomic weights as decision variables. Obviously, as $\epsilon$ is decreased and the relaxation order is increased, those atomic measures approximate closely the moments of the global optimal occupation measure (assuming its uniqueness), and good approximation of controls and trajectories can be recovered. If high precision is required, those may later be refined by applying any local optimization technique, or by simply working out necessary optimality conditions based on the observed solution structure. Finally, note that to circumvent the exponential growth in the dimension of $G$ of the computational method entails, one can simply identify one coordinate (state or control) at a time, by simply considering moments of time and that coordinate. Then, one only needs to grid the projection of $G$ on a $2$-dimensional grid, which is well within reach of current mid-scale LP solvers.

\section{Extensions}
\label{sec:extensions}

The approach can easily take into account a free initial state and/or time, by introducing an initial occupation measure $\mu_0 \in \PositiveMeasures(\{t_0\}\times Y_0)$, with $Y_0 \subset \R^n$
a given compact set of allowed initial conditions, such that for an admissible starting point $(t_0,y(t_0))$,
\begin{equation}
\label{eq:weakInitial}
\langle v, \mu_0 \rangle = v(t_0,y(t_0)),
\end{equation}
for all continuous test functions $v(t,\ybar)$ of time and space. That is, in weak problem \eqref{eq:weakProblemForward}, one replaces some of the boundary conditions by injecting \eqref{eq:weakInitial} and making $\mu_0$ an additional decision variable. In dual \eqref{eq:dualExplicit}, this adds a constraint on the initial value of $v$ and modifies the cost.

Similarly, a terminal occupation measure $\mu_f$ can be introduced for free terminal states and/or time. Note that injecting  both \eqref{eq:weakInitial} and its terminal counterpart in \eqref{eq:weakProblemForward} requires the introduction of an additional affine constraint to exclude trivial solutions, e.g. $\langle 1, \mu_0 \rangle = 1$ so that $\mu_0$ is a probability measure.
In dual problem \eqref{eq:dualExplicit}, this introduces an additional decision variable.

More interestingly, the initial time may be fixed w.l.g. to $t_0 = 0$, but only the spatial probabilistic distribution of initial states is known. Let $\xi  \in \PositiveMeasures(Y_0)$ be the probability measure whose law describes such a distribution. Then, the additional constraints for \eqref{eq:weakProblemForward} are
\begin{equation}
\langle v, \mu_0 \rangle = \int_{Y_0} \!\! v(0,\ybar) \, \xi(\diff \ybar).
\end{equation}
As remarked in \cite{sicon}, this changes the interpretation of LP \eqref{eq:weakProblemForward} to the minimization of the expected value of the cost given the initial distribution. See also \cite{roa} for the Liouville interpretation of the LP as transporting the probability $\xi$ along the optimal flow.

Finally, integral constraints can be easily taken into account in our framework. For instance, if $\delta = 0$ in Assumption~\ref{th:assGrowth}, it is essential to add a constraint of the type $\int_T \lvert u(t) \rvert^p \, \diff t \leq u_{\max}$ in \eqref{eq:OCP}, with $u_{\max}$ a given bound on the norm of the control. This results in problem \eqref{eq:weakProblem} in additional linear constraint $\langle \frac{ \lvert \ubar \rvert^p}{1 + \lvert \ubar \rvert^p}  , \mu \rangle \leq u_{\max}$, hence in an additional scalar decision variable in the various dual problems.

\section{Examples}
\label{sec:examples}

This section presents several examples of the approach. Practical construction of the semidefinite relaxations were implemented via the \texttt{GloptiPoly} toolbox \cite{GloptiPoly}, and solved numerically via \texttt{SeDuMi} \cite{SeDuMi}.

\subsection{Simple impulse}
\label{sec:exImpulse}
We consider optimal control problem \eqref{preface1}, our first motivating example. 
The unique solution is to apply an impulse of unit amplitude at $t=\frac{1}{2}$, such that minimizing sequences in $L^2$ tend weakly to the Dirac measure. Therefore, the optimal DiPerna-Majda measure is given by
\begin{equation}
\sigma(\diff t) = \diff t + \delta_{\frac{1}{2}}(\diff t), \qquad \nu (\diff\ubar|t) = \begin{cases} \delta_0(\diff \ubar) &\mbox{if } t \neq \frac{1}{2}, \\
\delta_{+ \infty}(\diff \ubar) & \mbox{if } t = \frac{1}{2} \end{cases}
\end{equation}
with optimal trajectory $y(t)=0$ if $0 \leq t \leq \frac{1}{2}$ and $y(t)=1$ if $1/2 < t \leq 1$.

The problem is rewritten as the measure LP \eqref{eq:weakProblemForward}, with\footnote{Obviously, we could make a simple substitution to make the problem affine in the control and use the more efficient relaxations of \cite{impulse}. This example is really to showcase the flexibility of our method. } $\wbar_0 := \sqrt{1 - \wbar_1^2}$ algebraically constrained in the description of our semi-algebraic measure support $G$ as $\wbar_0^2 = 1 - \wbar_1^2$ and $\wbar_0 \geq 0$:
\begin{equation}
\begin{aligned}
\inf_{\gamma,\mu_1} \;\; & \langle (t-\frac{1}{2})^2 \wbar_1^2, \gamma \rangle \\
\text{s.t.} \;\; & \langle \frac{\partial v}{\partial t} \wbar_0^2  + \frac{\partial v}{\partial \ybar} \wbar_1^2 , \gamma \rangle = \langle v(1,\cdot), \mu_1 \rangle - v(0,y(0)), \:\:\forall v \in \R[t,\ybar]\\
& \gamma \in \PositiveMeasures(G), \:\: \mu_1 \in \PositiveMeasures(\R).
\end{aligned}
\end{equation}
Recall in particular that $u = \infty$ if and only if $w_0 = 0$.
Solving the problem with \texttt{GloptiPoly} leads at the fifth relaxation to moments of the form
\begin{align*}
\left(\ell_z(t^k) \right)_{k \in \N} & = ( 2.0000,\,  1.0000,\,  0.5833,\,  0.3750,\,  0.2625,\,  0.1979,\, \ldots ), \\
\left(\ell_z(\wbar^k) \right)_{k \in \N} & = ( 2.0000,\,    1.0101,\,    1.0000,\,    0.9943,\, 0.9903,\,    0.9873,\, \ldots )
\end{align*}
to be compared with the moments of the known optimal solution:
\begin{align*}
\left( \langle t^k, \gamma^* \rangle \right)_{k \in \N} & = ( 2.0000,\,  1.0000,\,  0.5833,\,  0.3750,\,  0.2625,\,  0.1979,\, \ldots ), \\
\left( \langle \wbar^k, \gamma^* \rangle \right)_{k \in \N} & = ( 2.0000,\,    1.0000,\,    1.0000,\,    1.0000,\,    1.0000,\,   1.0000,\, \ldots ).
\end{align*}
Finally, Figure~\ref{fig:trajImpulse} shows the reconstructed trajectory using the method outlined in Section \ref{sec:reconstruction}. The solution agrees with the true solution, even during the jump.

\begin{figure}
\centering
\includegraphics[width=0.8\textwidth]{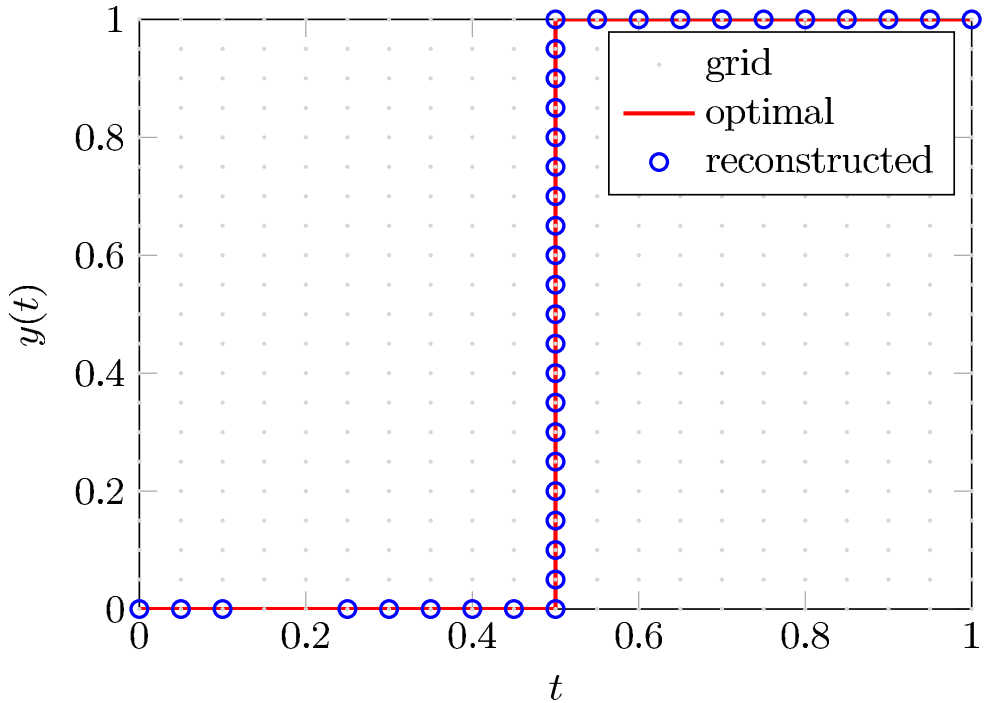}
\caption{Reconstruction trajectory for Example~\ref{sec:exImpulse}.}
\label{fig:trajImpulse}
\end{figure}

\subsection{Smeared impulses}
\label{sec:exampleSmeared}

We consider now optimal control problem \eqref{zadani3}, our second motivating example. This sequence tends weakly to the DiPerna-Majda measure given by
\begin{equation}
\sigma(\diff t) = 2 \, \diff t, \qquad \nu(\diff \ubar|t) = \frac{1}{2} \delta_0(\diff \ubar) + \frac{1}{2} \delta_{+\infty}(\diff \ubar),
\end{equation}
with optimal trajectory given by $y(t)=t$.

In this case, one can avoid the use of lift variable $\wbar_0$, since with the positivity constraint on $u$, we have simply $\wbar_0 = 1 - \wbar_1$. Measure LP \eqref{eq:weakProblemForward} is then expressed as
\begin{equation}
\label{eq:toto}
\begin{aligned}
\inf_{\gamma, \mu_1} \;\; & \langle \frac{(1-\wbar)^3 \, \wbar^2}{(1-\wbar)^4+\wbar^4} + (1-\wbar) \, (\ybar_1-t)^2, \gamma \rangle \\
\text{s.t.} \;\; &\langle \frac{\partial v}{\partial t} (1-\wbar) + \frac{\partial v}{\partial \ybar} \wbar, \gamma \rangle =  \langle v(1,\cdot), \mu_1 \rangle - v(0,y(0)), \:\: \forall v \in \R[t,\ybar]\\
& \gamma \in \PositiveMeasures([0,1]^3), \:\: \mu_1 \in \PositiveMeasures(\R)
\end{aligned}
\end{equation}
Because we started with a cost rational in the control, there remains a rational expression in \eqref{eq:toto} despite the change of variables introduced in Section \ref{sec:compactsets}.
This rational term can then treated by introducing the lifting variable $r = \frac{(1-\wbar)^3 \, \wbar^2}{(1-\wbar)^4+\wbar^4}$, constrained algebraically via the equation $r ((1-\wbar)^4+\wbar^4) = (1-\wbar)^3 \wbar^2$.

Solving the problem with GloptiPoly leads at the fourth relaxation leads to moments of the form
\begin{align*}
\left( \ell_z(t^k) \right)_{k \in \N} & = ( 2.0026  ,\,  1.0026  ,\,  0.6692  ,\,  0.5026  ,\,  0.4026  ,\,  0.3359 ,\, \ldots ), \\
\left( \ell_z(\wbar^k) \right)_{k \in \N} & = ( 2.0026  ,\,   1.0026  ,\,   1.0012  ,\,   0.9999  ,\,   0.9985  ,\,   0.9972,\, \ldots )
\end{align*}
to be compared with the moments of the known optimal solution
\begin{align*}
\left( \langle t^k, \gamma^* \rangle \right)_{k \in \N} & = ( 2.0000  ,\,  1.0000 ,\,  0.6667  ,\,  0.5000 ,\,   0.4000  ,\,  0.3333,\, \ldots ), \\
\left( \langle \wbar^k , \gamma^* \rangle \right)_{k \in \N} & = ( 2.0000,\,    1.0000,\,    1.0000,\,    1.0000,\,    1.0000,\,   1.0000,\, \ldots ).
\end{align*}
The trajectory reconstructed with the method of Section \ref{sec:reconstruction} is given in Fig.~\ref{fig:smearedreconst}. The reconstructed solution agrees with the true optimal solution.

\begin{figure}
\centering
\includegraphics[width=0.8\textwidth]{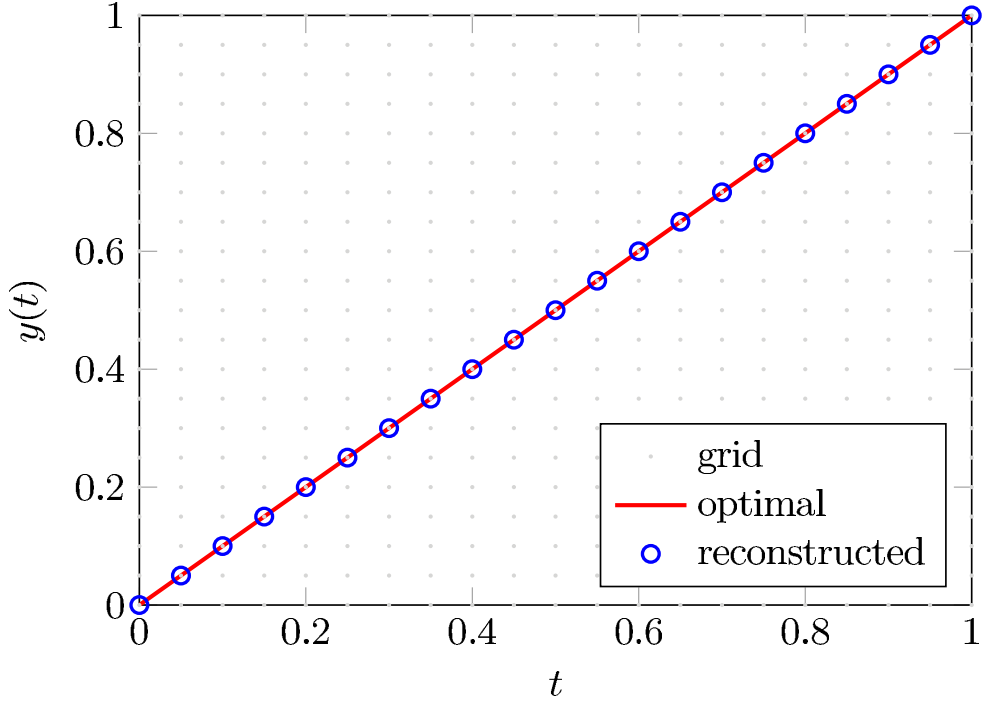}
\caption{Reconstruction trajectory for Example~\ref{sec:exampleSmeared}.}
\label{fig:smearedreconst}
\end{figure}

\subsection{Non-convex problem}
\label{sec:exRDV}

We consider now a simplified planar orbital rendezvous problem constrained in a non-convex domain:
\begin{equation}
\begin{aligned}
\inf_{u} \;\; & \int_{0}^{1} \lvert u \rvert \, \diff t \\
\text{s.t.} \;\; & \dot{y}_1 =  \pi \, y_2,  \:\: \dot{y}_2 =  -\pi \, y_1 + u,  \\
& y(0) =  (1/2,\:0), \:\: y(1) = (-1,\:0),\\
& y_1^2(t) + y_2^2(t) \leq 2, \:\: y_1^2(t) + (y_2(t)- 1/2)^2 \geq 1/4, \\
& u \in L^1([0,1]).
\end{aligned}
\end{equation}
Instead of using $\wbar_0 := 1- \lvert \wbar_1 \rvert$ as for even $p$ in Section \ref{sec:compactsets}, one could instead use the lifting variable $r = \lvert \wbar_1 \rvert$, algebraically represented as $r^2 = \wbar_1^2$, $r \geq 0$. Then LP problem \eqref{eq:weakProblemForward} is written as
\begin{equation}
\begin{aligned}
\inf_{\gamma} \;\; & \langle r, \gamma \rangle \\
\text{s.t.} \;\; &\langle \frac{\partial v}{\partial t} (1-r) + \frac{\partial v}{\partial \ybar_1}
 2 \pi \, \ybar_2 (1-r)  + \frac{\partial v}{\partial \ybar_2} (-2 \pi \, \ybar_1 (1-r) + \wbar_1),
 \gamma \rangle \\
& \quad\quad = v(1,y(1)) - v(0,y(0)), \:\: \forall v \in \R[t,\ybar]\\
& \gamma \in \PositiveMeasures(G)
\end{aligned}
\end{equation}
with semi-algebraic set
\[
\begin{array}{l}
G = \{(t,y_1,y_2,w,r) \in \R^5 : t(1-t) \geq 0, \: 2-y^2_1-y^2_2 \geq 0,\\
\quad\quad  -1/4+y^2_1 + (y^2_2-1/2)^2 \geq 0, \:
1-w^2_1 \geq 0, \: r \geq 0, \: r^2 = w^2_1\}
\end{array}
\]
defined accordingly to the constraints in the optimal control problem.
The trajectory reconstructed from moments of the 6th relaxation is reported back on Fig.~\ref{fig:RDVreconst}, with a relaxation cost of $p_6^*=0.824$. From this, it is easy to infer a candidate optimal trajectory involving an impulse of $\approx 0.273$ at $t=0$, followed by an impulse of $\approx -0.227$ at $t \approx 0.318$, followed by a control in feedback form of $u = y_1/(2y_2+1)$ to steer around the obstacle, followed by a free coasting arc. This admissible policy has a cost of $\approx 0.846$, which, given the relaxation cost, strongly suggests its global optimality.

\begin{figure}
\centering
\includegraphics[width=0.8\textwidth]{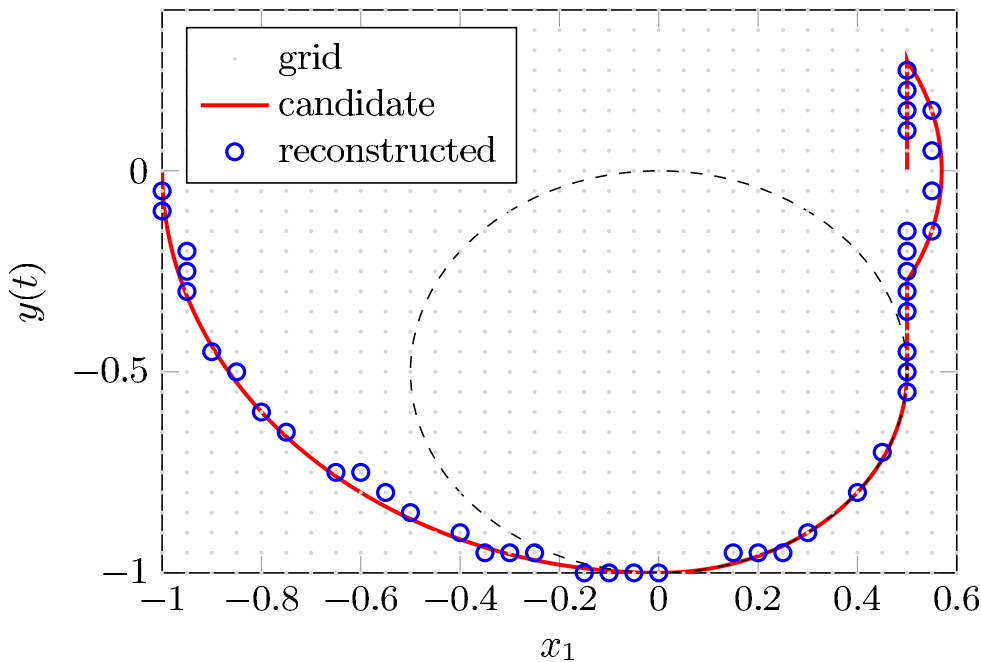}
\caption{Reconstruction trajectory for the example of Ex.~\ref{sec:exRDV}.}
\label{fig:RDVreconst}
\end{figure}

\subsection{Weierstrass' example}
\label{sec:weierstrass}

Consider the optimal control problem
\begin{equation}
\begin{aligned}
\inf_u \;\; & \int_{-1}^{1} t^2 u^2 \, \diff t \\
\text{s.t.} \;\; & \dot{y} =  u  \\
& y(-1) =  -1, \quad y(1) = 1 \\
& u \in L^2([-1,1]), \quad u(t) \geq 0.
\end{aligned}
\end{equation}
Note the lack of coercivity in the cost integrand for $t=0$, so that the standing Assumptions \ref{th:assGrowth} are not met.
Following \cite[Section 5.4]{Stein} a minimizing sequence for this problem is
\begin{equation}
\label{eq:weierstrassMinimizing}
   u^k(t) = \begin{cases} k & \mbox{if } \frac{-1}{k} \leq t \leq \frac{1}{k}, \\ 0 & \mbox{otherwise}. \end{cases}
\end{equation}
Note however that $\lim_{k \rightarrow \infty} \int_{-1}^{1} \! (u^k(t))^2 \, dt \rightarrow \infty$, such that the infimum cannot be attained by a DiPerna-Majda measure. 

Solving the LP problem \eqref{eq:weakProblemForward} with {\tt GloptiPoly} leads to numerical issues,
as the mass of $\gamma$ grows without bounds at each relaxation, as expected.
We are currently investigating suitable analytical and numerical frameworks to cope with this problem.
However, if one introduces an additional $L^2$ norm on the control, the problem becomes tractable by our approach.
The new optimal minimizing sequence will obviously be different from \eqref{eq:weierstrassMinimizing}.

\section*{Acknowledgment}

This research was supported by a project between the Academy of Sciences of the Czech Republic (AV\v{C}R) and the French Centre National de
la Recherche Scientifique (CNRS) entitled ``Semidefinite programming for nonconvex problems of calculus of variations and optimal control''.
The first author was also supported by the United Kingdom Engineering and Physical Sciences Research Council under Grant EP/G066477/1.


\begin{thebibliography}{XX}


\bibitem{Ambrosio2000BV}
L.~Ambrosio, N.~Fusco, D.~Pallara.
\newblock Functions of bounded variation and free discontinuity problems.
\newblock Oxford University Press, UK, 2000.

\bibitem{Anderson}
E. J. Anderson, P.~Nash.
\newblock Linear programming in infinite-dimensional spaces: theory and applications.
\newblock Wiley, 1987.

\bibitem{Barvinok2002convexity}
A.~Barvinok.
\newblock A course in convexity.
\newblock American Mathematical Society, Providence, NJ, 2002.

\bibitem{BenTal2001lecture}
A. Ben-Tal, A. Nemirovski.
\newblock Lectures on modern convex optimization: analysis, algorithms,
  and engineering applications.
\newblock SIAM, Philadelphia, 2001.

\bibitem{blaquire}
A.~Blaqui\`ere.
\newblock Impulsive optimal control with finite or infinite time horizon.
\newblock Journal of Optimization Theory and Applications, 46:431--439, 1985.

\bibitem{Bressan1988graphcompl}
A. Bressan, F. Rampazzo.
\newblock On differential systems with vector-valued impulsive controls.
\newblock Unione Matematica Italiana. Bollettino. B., 2(3):641--656, 1988.

\bibitem{bryson-ho}
A. E. Bryson, Y.C. Ho.
\newblock Applied optimal control theory.
\newblock Ginn \& Co., Waltham, 1969.

\bibitem{claeys2014thesis}
M.  Claeys.
\newblock Mesures d'occupation et relaxations semi-d\'efinies pour la
  commande optimale.
\newblock PhD thesis (in French), University of Toulouse, 2013.

\bibitem{claeys2014reconstruction}
M. Claeys, R. J. Sepulchre.
\newblock Reconstructing trajectories from the moments of occupation measures.
\newblock Proceedings of the IEEE Conference on Decision and Control, 2014.

\bibitem{impulse}
M. Claeys, D. Arzelier, D. Henrion, J.-B. Lasserre.
\newblock Measures and LMIs for non-linear optimal impulsive control.
\newblock  IEEE Transactions on Automatic Control, 59(5):1374-1379, 2014. 

\bibitem{diperna1987oscillations}
R. J. DiPerna, A. J. Majda.
\newblock Oscillations and concentrations in weak solutions of the
  incompressible fluid equations.
\newblock  Communications in Mathematical Physics, 108(4):667-689, 1987.

\bibitem{dykhta2010hamilton}
V. A. Dykhta, O. N. Samsonyuk.
\newblock Hamilton-Jacobi inequalities in control problems for impulsive
  dynamical systems.
\newblock Proceedings of the Steklov Institute of Mathematics,
  271(1):86--102, 2010.

\bibitem{Fattorini}
H. O. Fattorini.
\newblock Infinite dimensional optimization and control theory.
\newblock Cambridge University Press, UK, 1999.

\bibitem{GaitsgoryQuincampoix2009}
V. Gaitsgory, M. Quincampoix.
\newblock Linear programming approach to deterministic infinite horizon
optimal control problems with discounting.
\newblock SIAM Journal On Control and Optimization, 48(4):2480-2512, 2009. 

\bibitem{Gamkrelidze}
R. V. Gamkrelidze.
\newblock Principles of optimal control theory.
\newblock Plenum Press, New York, 1978.

\bibitem{getz-martin}
W. M. Getz, D.H. Martin.
\newblock Optimal control systems with state variable jump discontinuities.
\newblock Journal of Optimization Theory and Applications, 31:195--205, 1980.

\bibitem{roa}
D.~Henrion, M.~Korda.
\newblock Convex computation of the region of attraction of polynomial control
  systems.
\newblock IEEE Transactions on Automatic Control, 59(2):297--312, 2014.

\bibitem{GloptiPoly}
D.~Henrion, J.-B. Lasserre, J.~L\"ofberg.
\newblock Gloptipoly 3: Moments, optimization and semidefinite programming.
\newblock Optimization Methods and Software, 24(4-5):761--779, 2009.

\bibitem{Kruzik-Roubicek}
M.~Kru{\v{z}}{\'\i}k, T.~Roub{\'\i}{\v{c}}ek.
\newblock On the measures of DiPerna and Majda.
\newblock   Mathematica Bohemica,  122:383--399, 1997.

\bibitem{Kruzik1998optimization}
M.~Kru{\v{z}}{\'\i}k, T.~Roub{\'\i}{\v{c}}ek.
\newblock Optimization problems with concentration and oscillation
  effects: relaxation theory and numerical approximation.
\newblock Numerical Functional Analysis and Optimization, 20(5-6):511--530, 1999.

\bibitem{cdc05}
 J. B. Lasserre, C. Prieur, D. Henrion.
\newblock Nonlinear optimal control: numerical approximation via moments and LMI relaxations.
\newblock Proc. IEEE Conf. Decision and Control and Europ. Control Conf.,
Sevilla, Spain, 2005.

\bibitem{sicon}
J.-B. Lasserre, D. Henrion, C. Prieur,  E. Tr{\'e}lat.
\newblock Nonlinear optimal control via occupation measures and
  {LMI} relaxations.
\newblock SIAM Journal on Control and Optimization, 47(4):1643--1666,
  2008.

\bibitem{lasserre}
J.-B. Lasserre.
\newblock Positive polynomials and their applications.
\newblock Imperial College Press, London, UK, 2010.

\bibitem{Meziat-etal}
R. Meziat, D. Patino, P. Pedregal.
\newblock An alternative approach for non-linear optimal control problems based on the method of moments.
\newblock Computational Optimization and Applications, 38(1):147--171, 2007.

\bibitem{Meziat-etal-2}
R. Meziat, T. Roub\'{i}\v{c}ek, D. Patino.
\newblock  Coarse-convex-compactification approach to numerical solution of nonconvex variational problems.
 \newblock Numerical Functional Analysis and Optimization,  31(4):460--488, 2010.

\bibitem{miller2003impulsive}
B. Miller, E. Ya. Rubinovich.
\newblock Impulsive control in continuous and discrete-continuous systems.
\newblock Springer, Berlin, 2003.

\bibitem{motta1995space}
M. Motta, F. Rampazzo.
\newblock Space-time trajectories of nonlinear systems driven by ordinary and
  impulsive controls.
\newblock Differential and Integral Equations, 8(2):269--288, 1995.

\bibitem{Neustadt}
L.~W. Neustadt.
\newblock Optimization, a moment problem and nonlinear programming.
\newblock SIAM Journal of Control, 2(1):33--53, 1964.

\bibitem{Roubicek}
T.~Roub{\'\i}{\v{c}}ek. 
\newblock Relaxation in optimization theory and
variational calculus. W. de Gruyter, Berlin, 1997.

\bibitem{Roubicek-Kruzik-2000}
T.~Roub{\'\i}{\v{c}}ek, M.~Kru{\v{z}}{\'\i}k. 
\newblock Adaptive approximation algorithm for relaxed optimization problems 
\newblock In Proceedings of "Fast Solutions of Discrete Optimization Problems" held in
WIAS, Berlin, May 8--12, 2000, (Eds. V. Schulz, K.-H. Hoffmann and R.H.W. Hoppe),
Birkh\"{a}ser, Basel, 2001

\bibitem{Royden}
H. L. Royden,  P.~Fitzpatrick.
\newblock Real analysis. 4th edition.
\newblock Prentice Hall, NJ, 2010.

\bibitem{soucek}
J. Sou\v{c}ek.
\newblock  Spaces of functions on domain $\Omega$, whose
$k$-th derivatives are measures defined on $\bar{\Omega}$. 
\newblock
\v Casopis Pro P\v estov\'an\'{\i} Matematiky, 97:10--46, 1972. 

\bibitem{Stein}
E. M. Stein, R. Shakarchi. Princeton lectures on analysis III. Real analysis: measure
theory, integration, and Hilbert spaces. Princeton University Press,
Princeton, NJ, 2005.

\bibitem{SeDuMi}
J.~F. Sturm.
\newblock Using {S}e{D}u{M}i 1.02, a {M}atlab toolbox for optimization over
  symmetric cones.
\newblock Optimization Methods and Software, 11--12:625--653, 1999.

\bibitem{Vinter1993ConvexDuality}
R.~Vinter.
\newblock Convex duality and nonlinear optimal control.
\newblock SIAM Journal on Control and Optimization, 31(2):518--538, 1993.

\bibitem{Vinter1978Equivalence}
R.~Vinter, R.~Lewis.
\newblock The equivalence of strong and weak formulations for certain problems
  in optimal control.
\newblock SIAM Journal on Control and Optimization, 16(4):546--570, 1978.

\bibitem{Young}
L.~C.~Young.
\newblock Lectures on the calculus of variations and optimal control
  theory.
\newblock W. B. Saunders Co., Philadelphia, NJ, 1969.

\end{thebibliography}
\end{document}